\newcommand{\EE}{\mathbb E}
\newcommand{\Z}{\mathbb Z}
\newcommand{\TT}{\mathbb{ T} }
\newcommand{\Ca}{\mathcal{C}}
\newtheorem{conjecture}[equation]{Conjecture}
\newtheorem{theorem}[equation]{Theorem}
\newtheorem{corollary}[equation]{Corollary}
\newtheorem{lemma}[equation]{Lemma}
\DeclareMathOperator{\sgn}{sgn}
\DeclareMathOperator{\T}{\mathcal{T}}
\numberwithin{equation}{section}
\begin{document}
\title[Discrete Carleson Theorems]{Sparse Bounds for  Random \\ Discrete  Carleson Theorems} 
\author[B. Krause]{Ben Krause}
\address{
Department of Mathematics
The University of British Columbia \\
1984 Mathematics Road
Vancouver, B.C.
Canada V6T 1Z2}
\email{benkrause@math.ubc.ca}
\author[M. T. Lacey]{Michael T. Lacey}
\address{
School of Mathematics
Georgia Institute of Technology \\
686 Cherry Street
Atlanta, GA 30332-0160}
\email{lacey@math.gatech.edu}

\thanks{B.K is an NSF Postdoctoral Research Fellow.  Research of M.L. supported in part by grant NSF-DMS 1265570 and  NSF-DMS-1600693.}
\date{\today}

\begin{abstract} 
We study discrete random variants of the Carleson maximal operator.  
Intriguingly, these questions remain subtle and difficult, even in this setting.  
Let  $\{X_m\}$ be an independent sequence of $\{0,1\}$ random variables with expectations 
\[ \mathbb E  X_m = \sigma_m = m^{-\alpha}, \ 0 < \alpha <  1/2,  \]
and  $ S_m = \sum_{k=1} ^{m} X_k$.  Then the maximal operator below   almost surely  is bounded from $ \ell ^{p}$ to $ \ell ^{p}$, 
provided the Minkowski dimension of $ \Lambda  \subset [-1/2, 1/2]$ is strictly less than $ 1- \alpha $.   
\begin{equation*}
\sup_{\lambda \in \Lambda  } \left| \sum_{m\neq 0} X _{\lvert  m\rvert } 
\frac{e(  \lambda m )}{   \textup{sgn} (m)S _{ |m| }} f(x- m) \right|. 
\end{equation*}
This operator also satisfies a sparse type bound.  
The form of the sparse bound immediately implies weighted estimates in all $ \ell ^{2}$, which are novel in this setting.  
Variants and extensions are also considered. 
\end{abstract}

\maketitle

\section{Introduction}
The Carleson maximal operator \cite{MR0199631} controls the pointwise convergence of Fourier series. 
In the discrete setting, this estimate is as follows.  

\begin{theorem}\label{disc} The discrete Carleson maximal operator
\begin{equation}\label{e:Carleson}
C f (x) := 
\sup_{0 \leq \lambda \leq 1} \left| \sum_{m \neq 0} f(n-m) \frac{e(\lambda m)}{m} \right| 
\end{equation}
is bounded on $\ell^p(\Z), \ 1<p<\infty$. Here and throughout, $e(t) := e^{2\pi i t}$.
\end{theorem}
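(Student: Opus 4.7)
The plan is to deduce Theorem~\ref{disc} from the classical Carleson--Hunt theorem on the real line via a transference argument. Recall that the continuous Carleson operator
\[
\mathcal{C}F(x) := \sup_{\lambda\in\RR} \Bigl| \mathrm{p.v.} \int_{\RR} F(x-y)\,\frac{e(\lambda y)}{y}\,dy \Bigr|
\]
is bounded on $L^p(\RR)$ for $1<p<\infty$. One then embeds $\ell^p(\Z)$ isomorphically into $L^p(\RR)$ via band-limited interpolation and compares the discrete and continuous kernels at integer nodes.

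Concretely, I would fix a Schwartz function $\phi$ whose Fourier transform is supported in $[-1/4,1/4]$ and satisfies $\phi(0)=1$ with $\phi(m)=0$ for $m\in\Z\setminus\{0\}$. For $f\in\ell^p(\Z)$ the band-limited interpolant $F(x) := \sum_{n\in\Z} f(n)\phi(x-n)$ satisfies $\|F\|_{L^p(\RR)}\sim\|f\|_{\ell^p(\Z)}$ by the Plancherel--P\'olya sampling inequality. I would then establish a pointwise comparison of the form
\[
Cf(n) \lesssim \mathcal{C}F(n) + \E f(n), \qquad n\in\Z,
\]
where $\E f$ is an auxiliary operator controlled by classical $\ell^p$-bounded discrete operators --- the Hardy--Littlewood maximal function and the discrete Hilbert transform. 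The comparison is obtained by writing the discrete sum $\sum_{m\neq 0}f(n-m)e(\lambda m)/m$ as the principal-value integral $\mathrm{p.v.}\int_\RR F(n-y)e(\lambda y)/y\,dy$ plus a correction: away from $y=0$, summation-by-parts against $\phi'$ yields rapid decay; near $y=0$, the band-limited structure of $F$ combined with cancellation in $e(\lambda y)$ bounds the correction by a standard maximal function. Summing $|Cf(n)|^p$ over $n\in\Z$ and invoking the continuous theorem then gives the claim.

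The main obstacle is obtaining the error bound \emph{uniformly} in the modulation parameter $\lambda$. For any fixed $\lambda$ the sum-to-integral comparison is routine, but the supremum forces estimates independent of where $\lambda$ sits in $[0,1]$. Since the Fourier support of $\phi$ occupies only a fixed window $[-1/4,1/4]$, the modulations $e(\lambda y)$ with $\lambda$ in the complement of that window interact with $\phi$ only through rapid oscillation. One handles this by splitting the singular kernel at the natural scale $1/|\lambda|$, using principal-value cancellation on the innermost piece and oscillatory gain (stationary phase or integration by parts against $\phi$) on the outer piece --- essentially the time-frequency splitting at the heart of Carleson's theorem itself, now playing the easier role of bounding an error term rather than a main term.
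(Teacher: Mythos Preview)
The paper does not supply its own proof of Theorem~\ref{disc}; it is quoted as a known result, with references to Campbell--Petersen \cite{MR958884}*{Lemma~2} and Stein--Wainger \cite{MR1056560}, and the surrounding paragraph explains the history (Carleson on $\mathbb{T}$, Hunt for $L^p$, Kenig--Tomas transferring to $\RR$, then the discrete version). So there is nothing in the paper to compare against beyond the citation.

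Your transference strategy is the standard route to this result and is in the same spirit as those references: take the continuous Carleson--Hunt bound as a black box and pass to $\Z$ via an extension/restriction comparison. One small technical slip: a Schwartz $\phi$ with $\widehat\phi$ supported in $[-1/4,1/4]$ \emph{cannot} satisfy $\phi(m)=\delta_{m,0}$ on $\Z$, since Poisson summation forces $\sum_k \widehat\phi(\xi+k)\equiv 1$, which is impossible if the support has length $<1$. You want $\widehat\phi$ supported in $(-1,1)$ and forming a partition of unity under integer shifts (or simply work with $\phi=\mathrm{sinc}$, support $[-1/2,1/2]$, and accept merely Schwartz-class decay). With that correction the rest of your outline---Plancherel--P\'olya to compare norms, splitting the sum-to-integral error at scale $|\lambda|^{-1}$, and dominating what remains by the maximal function and the discrete Hilbert transform---is a workable plan, and the uniformity in $\lambda$ is exactly what the scale splitting is designed to deliver.
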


The original article of Carleson addressed the  Theorem above  with the integers replaced by the  circle group, in the case of $ p=2$, with its extension to $ L ^{p}$ due to Hunt \cite{MR0238019}. 
It was transferred to the real line by Kenig and Tomas \cite{MR583403}, but the variant for the integers was not noticed for several years. We are aware of two independent references for the Theorem above, that of  Campbell and Petersen \cite {MR958884}*{Lemma 2} and Stein and Wainger  \cite {MR1056560}.

In its much more well known version on the real line, this Theorem has several  variants and extensions. 
For instance, the polynomial variant of Stein and Wainger \cite{MR1879821}, and its deep extension by Victor Lie \cites{MR2545246,11054504}.  
Pierce and Yung \cite{150503882} have recently established certain Radon transform versions of Carleson's Theorem. 
These are powerful and deep facts.

The discrete versions of these results has only recently been investigated. To give the flavor of results that are under consideration, we recall this conjecture of Lillian Pierce \cite{LP_PC}.  Below, and throughout this paper we set 
$ e (t) = e ^{2 \pi i t}$, and identify the fundamental domain for $ \mathbb T = \mathbb R / \mathbb Z $ as $ [-1/2, 1/2]$.  

\begin{conjecture}\label{j:}  The following inequality holds on $ \ell ^2 (\mathbb Z )$. 
\begin{equation*}
\Bigl\lVert  \sup _{-\frac 12  \leq  \lambda \leq \frac 12  } 
\Bigl\lvert  \sum_{n\neq 0}  f (x-n)  \frac {e (\lambda  n ^2 )} n  \Bigr\rvert\, 
\Bigr\rVert _{\ell ^2 } \lesssim \lVert f\rVert _{\ell  ^2 }. 
\end{equation*}
\end{conjecture}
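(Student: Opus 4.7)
The natural attack on this conjecture is through the Hardy-Littlewood circle method, paralleling the strategy Bourgain pioneered for pointwise ergodic theorems along polynomial orbits and refined by Mirek, Stein, Trojan, and Pierce. The plan is to partition the supremum over $\lambda \in [-1/2, 1/2]$ according to rational approximation: for each denominator $q$, form a major-arc region $\{\lambda : |\lambda - a/q| < q^{-2}\}$ for $(a,q)=1$, and study the complementary minor-arc region separately. Writing $n = qk + r$, the phase factorizes as $e(\lambda n^2) = e(ar^2/q)\cdot e(\beta(qk+r)^2)$ with $\beta = \lambda - a/q$, which after summing over residues produces a Gauss sum $q^{-1}\sum_r e(ar^2/q)$ of magnitude $\sim q^{-1/2}$, multiplied by a smoothly-oscillating operator that ought to be comparable to a continuous quadratic Carleson operator acting on a subsampled function.

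The execution would proceed in four steps. First, apply Ionescu-Wainger multiplier theory to replace the raw operator near each $a/q$ by a smooth model supported in a frequency box of scale $q^{-2}$, with errors controllable away from $p=2$. Second, transfer Victor Lie's continuous quadratic Carleson theorem to the integers via a Magyar-Stein-Wainger style sampling argument, to bound a \emph{single} major-arc contribution on $\ell^2(\mathbb{Z})$. Third, sum over admissible $(a,q)$, leveraging the $q^{-1/2}$ Gauss-sum decay. Fourth, on the complementary minor arcs, apply Weyl's inequality to obtain pointwise decay of the Fourier multiplier, and combine with a Littlewood-Paley decomposition into dyadic annuli $|n| \sim 2^{j}$ together with a Rademacher-Menshov square-function argument to absorb the supremum over $\lambda$ into a summable $\ell^2$ estimate.

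The decisive obstacle --- and the reason this conjecture is still open --- is concentrated in the third step. The Gauss-sum decay is exactly $q^{-1/2}$, while the number of reduced fractions $a/q$ of denominator $q$ is $\varphi(q) \sim q$, so a naive triangle-inequality bound produces a divergent $\sum_q q^{1/2}$. What is genuinely needed is a form of quasi-orthogonality between the model operators at distinct major arcs, or a device that absorbs the logarithmic divergence into the Carleson supremum itself rather than the outer sum. This is precisely the point at which the random sparse framework of the present paper sidesteps the difficulty: the random thinning replaces delicate arithmetic coherence at rational frequencies with a more tractable independence structure, making the corresponding sums manageable almost surely. Bridging back to the deterministic conjecture would presumably require either a new variation-norm input at the level of the rational-frequency model operators, or a fundamentally new number-theoretic tool controlling the interaction between the quadratic phase and the integer lattice uniformly over $\lambda$.
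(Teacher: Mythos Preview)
This statement is presented in the paper as an open \emph{conjecture} due to Lillian Pierce; the paper does not prove it and does not claim to. The entire point of the surrounding discussion is that even restricted-supremum versions of this inequality are hard, and the paper turns instead to \emph{random} analogues precisely because the deterministic quadratic Carleson problem remains out of reach.

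Your write-up is not a proof and you are candid about that: you sketch the circle-method template and then correctly identify the summability failure over major arcs (Gauss-sum decay $q^{-1/2}$ against $\varphi(q)\sim q$ many arcs) as the decisive obstruction. That diagnosis is accurate and matches the informal understanding in the field; the paper itself alludes to the same phenomenon when it says that even in the random setting ``remnants of the arithmetic difficulties'' persist, and when it points to \cite{151206918} for partial results under an \emph{arithmetic} Minkowski-dimension hypothesis on $\Lambda$. So there is nothing to compare: the paper has no proof, and your proposal is a discussion of why no one does. If you intended this as a proof attempt, the gap is exactly the one you name in your third paragraph, and it is genuine --- no known quasi-orthogonality or variational device currently closes it.
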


A recent paper of the authors \cite{151206918}  supplied sufficient conditions on $ \Lambda $ 
so that if one forms a restricted supremum over $ \lambda \in  \Lambda $, the maximal function above would be bounded on $ \ell ^2 $.  
Even under this restriction, in which we require sufficiently small \emph{arithmetic Minkowski dimension},  our proof is difficult,  even for examples of $ \Lambda $ being a sequence that converge very rapidly to the origin.  
The interested reader is referred to \cites{151206918,160408695} for more background (including the definition of arithmetic Minkowski dimension), and related results.  

It is therefore of some interest to study random versions of these questions, in which  we expect 
some of the severe obstacles in the arithmetic case to be  of an easier nature.  
This is so,  but even still, we will not be able to prove the most natural conjectures, and indeed even find that the random versions still have remnants of the  arithmetic difficulties of the non-random versions.  

We consider two examples of random Carleson operators.   
From now on $\{ X_n \;:\; n\in \mathbb Z  \}$ will denote a sequence of independent $\{0,1\}$ random variables (on a probability space $\Omega$) with expectations
\begin{equation}\label{e:X}
\EE X_m := \sigma_m = m^{-\alpha}, \ 0 < \alpha < 1.
\end{equation}
Also define the partial sums by 
\begin{equation}\label{e:Sm}
S_ n = 
\begin{cases}
\sum_{m=1} ^{n} X_m  & n >0 
\\
- S _{-n}   & n < 0 
\end{cases}. 
\end{equation}
By the Law of Large Numbers, $S_n$ is approximately $ c_ \alpha n ^{1- \alpha }$.

In the first random examples, the analogy to the (linear) Carleson theorem stronger, since the frequency modulation and shift parameters agree. 
\begin{equation}\label{e:Tdef}
\mathcal{T}_{\alpha,\Lambda}^{\omega}f(x) := \sup_{\lambda \in \Lambda} \left| \sum_{m\neq 0} X _{\lvert  m\rvert } 
\frac{e(  \lambda m )}{  S _{ m }} f(x- m) \right|.
\end{equation}
We consider arbitrary $ 0< \alpha < 1 $ for the above operator, but 
in the second example below, we only consider $ \alpha = 1 - \frac{1}{d} $ where $ d\geq 3$ is an integer, and have distinct frequency modulations and shift parameters. 
\begin{gather}  \label{e:C}
\Ca_{\alpha,\Lambda}^\omega f(x) := 
 \sup_{\lambda \in \Lambda} \left| \sum_{m \neq 0}   X_m \frac{e( \lambda m )}{S _{  m }} f (x-S_m) \right|  . 
\end{gather}
Note that $ \lvert  S_m\rvert  \approx m ^{d}$ above is random version of monomial power.  
In both definitions, we are using the definition \eqref{e:Sm} to define $ S _{m}$ for negative $ m$.  

We will not be able to control the unrestricted supremum of $ \lambda $, using \emph{Minkowski dimension} as a sufficient condition for the boundedness of our maximal operators.  
Given $ \Lambda \subset [-1/2,1/2]$,  and $ 0< \delta < 1$, let  
 $N(\delta) = N_\Lambda(\delta)$  be the fewest number of  intervals $ I_1 ,\dotsc, I_N$ required to cover $\Lambda$, 
 subject to the condition at $ \lvert  I_n\rvert < \delta  $ for all $ 1\leq n \leq N$.  
We say that $ \Lambda $ has \emph{Minkowski dimension $ d$} if 
\begin{equation}\label{e:Mink}
 C_d := \sup_{0< \delta \leq 1} N(\delta) \delta^{d } < \infty . 
\end{equation}
The point of interest in the next theorem are that we (a) allow arbitrary $ 0< \alpha <1$, (b) have an explicit assumption on the Minkowski dimension of $ \Lambda $, and (c) obtain  \emph{a sparse bound} for the operator.  

\begin{theorem}\label{random0}
Suppose
\[ \EE X_m = \sigma_m = m^{-\alpha}, \qquad  0 < \alpha < 1, \]
and let $\Lambda \subset [0,1]$ have upper Minkowski dimension $ \delta $ strictly less than $    {1- \alpha }  $.
Then almost surely, these two properties hold. 
\begin{enumerate}
\item   For all $ 1 < p < \infty $,  we have $ \lVert T_{\alpha,\Lambda}^\omega  \;:\; \ell ^{p} \mapsto \ell ^{p}\rVert < \infty $. 

\item There is a $ 1 < r = r (\alpha ,\delta) < 2 $ so that for 
finitely supported functions $ f$ and $ g$, there is a sparse operator $ \Pi _{\mathcal S,r}$ so that 
\begin{equation} \label{e:Talpha_sparse}
\lvert  \langle \T_{\alpha,\Lambda}^\omega f, g   \rangle\rvert  \lesssim \Pi _{\mathcal S, r} (f,g). 
\end{equation}
\end{enumerate}
In particular, there holds almost surely, for all   weights $ w \in A_{2/r} \cap RH _{r/ (2-r)}$, 
\begin{equation}\label{e:A3}
 \lVert \T_{\alpha,\Lambda}^\omega  \;:\; \ell ^{2} ( \mathbb Z , w ) \mapsto \ell ^{2} (\mathbb Z , w)\rVert 
 < \infty . 
\end{equation}
\end{theorem}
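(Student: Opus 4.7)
The strategy is to (i) isolate a classical Carleson piece from a zero-mean random fluctuation, (ii) control the fluctuation at every scale via Bernstein concentration and absorb the Carleson supremum using the Minkowski dimension bound, and (iii) read off the sparse bound from $L^r$-improving single-scale estimates.

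\emph{Step 1: Deterministic reduction.} A Hoeffding--Bernstein estimate for the partial sums $S_m$ produces, almost surely, the asymptotic $S_m = c_\alpha m^{1-\alpha}(1+O(m^{-\eta}))$ for some $\eta=\eta(\alpha)>0$. Thus I may replace $(\sgn m)\,S_{|m|}$ by $c_\alpha\,(\sgn m)\,|m|^{1-\alpha}$ at the cost of a scale-summable error. Writing $X_{|m|}=\sigma_{|m|}+(X_{|m|}-\sigma_{|m|})$ then decomposes $\T_{\alpha,\Lambda}^\omega = C^{\mathrm{mean}} + \T^{\mathrm{fluct}}$, where $C^{\mathrm{mean}}$ is, up to a harmless tail, the classical discrete Carleson operator of Theorem~\ref{disc}, for which both the $\ell^p$ bound and a sparse bound are known. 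All subsequent analysis therefore concentrates on the mean-zero part $\T^{\mathrm{fluct}}$.

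\emph{Step 2: Single-scale probabilistic control.} Dyadically decompose $\T^{\mathrm{fluct}}=\sum_k T_k^{\mathrm{fluct}}$, where $T_{k,\lambda}^{\mathrm{fluct}}$ is supported on $|m|\sim 2^k$. Its kernel is a sum of $\sim 2^k$ independent mean-zero terms, each of modulus $\lesssim 2^{-k(1-\alpha)}$ and variance $\lesssim 2^{-k(2-\alpha)}$. Applying Bernstein's inequality to $\widehat{K_{k,\lambda}}(\xi)$ on a $2^{-k}$-net of $\xi$ and using Plancherel yields
\[
\|T_{k,\lambda}^{\mathrm{fluct}}\|_{\ell^2\to\ell^2}\;\lesssim\; k^C\, 2^{-k(1-\alpha)/2}
\]
with probability exceeding $1-\exp(-ck^{2C})$ for any fixed large $C$. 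Combined with the trivial size bound $\|T_{k,\lambda}^{\mathrm{fluct}}\|_{\ell^1\to\ell^\infty}\lesssim 2^{-k(1-\alpha)}$, Riesz--Thorin delivers $L^r\to L^{r'}$ single-scale estimates with a positive geometric gain in $k$ for every $r\in(1,2]$.

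\emph{Step 3: Carleson supremum and sparse form.} Since $K_{k,\lambda}$ is supported on $|m|\lesssim 2^k$, the modulation $e(\lambda m)$ is essentially constant in $\lambda$ over intervals of length $2^{-k}$; the Minkowski hypothesis thus provides a net $\Lambda_k\subset\Lambda$ of cardinality $\lesssim 2^{k\delta}$ realizing $\sup_{\lambda\in\Lambda}|T_{k,\lambda}^{\mathrm{fluct}}f|$ up to a harmless error. The sum-of-squares bound
\[
\bigl\|\sup_{\lambda\in\Lambda_k}|T_{k,\lambda}^{\mathrm{fluct}}f|\bigr\|_{\ell^2}^2\;\leq\;\sum_{\lambda\in\Lambda_k}\|T_{k,\lambda}^{\mathrm{fluct}}f\|_{\ell^2}^2,
\]
together with the union bound from Step~2, yields $\bigl\|\sup_\lambda|T_k^{\mathrm{fluct}}f|\bigr\|_{\ell^2}\lesssim k^C\,2^{-k((1-\alpha)-\delta)/2}\|f\|_{\ell^2}$. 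Since $\delta<1-\alpha$, the exponent is strictly positive; summing in $k$ and applying Borel--Cantelli gives the almost sure $\ell^2$-boundedness, and interpolation extends this to all $\ell^p$, proving conclusion~(1). For conclusion~(2), the analogous single-scale $L^r$-improving estimates with geometric decay are fed into the Culiuc--Di Plinio--Ou sparse domination framework, producing the sparse bound \eqref{e:Talpha_sparse} for some $r=r(\alpha,\delta)\in(1,2)$; the weighted estimate \eqref{e:A3} is then a formal consequence of standard sparse-weighted theory.

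\emph{Main obstacle.} The delicate point is the balance between the single-scale gain $2^{-k(1-\alpha)/2}$ from Bernstein concentration and the loss $2^{k\delta/2}$ coming from the Carleson supremum: the strict inequality $\delta<1-\alpha$ is precisely the margin required, and this margin must also cover a triple union bound (over the $\xi$-net, over $\Lambda_k$, and across scales $k$) so that Borel--Cantelli closes. Upgrading the $\ell^2$ bound into a sparse form with $r<2$, rather than merely $r=2$, requires the single-scale $L^r\to L^{r'}$ estimates to be uniformly available with geometric decay in $k$---technical, but in line with the now-standard sparse-domination toolkit.
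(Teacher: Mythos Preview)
Your approach is essentially the paper's: split off the deterministic Carleson piece (handled by Theorem~\ref{disc} and Theorem~\ref{t:sparse}), dispose of the $1/S_m\to 1/W_m$ replacement errors by absolute summability, and control the mean-zero fluctuation $\sum Y_m\,e(\lambda m)/W_m$ scale-by-scale via sub-gaussian concentration combined with the Minkowski hypothesis $\delta<1-\alpha$. The only packaging differences are that the paper replaces your $2^{-k}$-net plus $\ell^2$-sum by the equivalent Sobolev-type Lemma~\ref{SOB} (yielding the identical threshold), and for the sparse bound it interpolates the localized bilinear form $\langle P_k f,g\rangle$ directly between an $r=2$ estimate with decay $2^{-\eta k}$ and an $r=1$ estimate with growth $2^{\alpha k}$ --- since each scale already decays geometrically, summing over the dyadic grid gives the sparse form immediately, with no need to invoke an external Culiuc--Di~Plinio--Ou framework.
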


In the second conclusion, we are using the notation of \S\ref{s:sparse}, specifically see \eqref{e:SP_def}.  It implies the  weighted inequalities \eqref{e:A3}, as is explained in  that section. In particular, there is a slightly wider class of inequalities that are true, as specified in \eqref{e:wts}.
We are \emph{not aware} of any prior weighted inequality for a discrete variant of the Carleson operators (except the Carleson operator itself).   (For discrete random Hilbert transforms, see \cite{lacey_spencer}.)
We remark that we could keep track of the dependence of the sparse index $ r$ as a function of $ \alpha $ and $ \delta $,
but we don't do so.  

\begin{theorem}\label{random}
Suppose $ d \geq 3$ is an integer and 
\[ \EE X_m = \sigma_m = m^{-\alpha}, \ \alpha = 1- \frac{1}{d}.  \]
Let $\Lambda \subset [-1/2,1/2]$ have  upper Minkowski dimension $ \delta $ strictly less than $ 1/d$, and  
$ \Lambda \cap (- \epsilon ,  \epsilon )=\emptyset $ for some $ 0 < \epsilon < \tfrac 14$. 
Then almost surely,  these two conclusions hold. 

\begin{enumerate}
\item   For all $ 1 < p < \infty $,  we have $ \lVert  \Ca_{\alpha,\Lambda}^\omega  \;:\; \ell ^{p} \mapsto \ell ^{p}\rVert < \infty $. 

\item There is a $ 1 < r = r (d,\delta) < 2 $ so that for 
finitely supported functions $ f$ and $ g$, there is a sparse operator $ \Pi _{\mathcal S,r}$ so that 
\begin{equation} \label{e:Talpha_sparse}
\lvert  \langle \Ca_{\alpha,\Lambda}^\omega f, g   \rangle\rvert  \lesssim \Pi _{\mathcal S, r} (f,g). 
\end{equation}
\end{enumerate}
In particular, the inequality \eqref{e:A3} also holds for $ \Ca_{\alpha,\Lambda}^\omega$. 
\end{theorem}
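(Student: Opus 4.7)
The unweighted $\ell^p$ estimate (1) and the weighted bound \eqref{e:A3} both follow once the sparse bound (2) is in hand, by the standard machinery of \S\ref{s:sparse}; I focus on \eqref{e:Talpha_sparse} and follow the same template used for Theorem \ref{random0}. Decompose the operator dyadically in the summation variable,
\[ \Ca_k^{\omega,\lambda} f(x) := \sum_{2^k \leq |m| < 2^{k+1}} X_m\, \frac{e(\lambda m)}{S_m}\, f(x - S_m), \]
and aim to prove an almost sure single-scale $\ell^2$ bound for $\sup_{\lambda \in \Lambda}|\Ca_k^{\omega,\lambda} f|$ that decays geometrically in $k$. Combined with the trivial kernel-size bound, this will interpolate to $\ell^r\to\ell^r$-improving estimates at each scale, which, when summed in $k$, feed into the standard sparse-from-single-scale reduction.

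Each $\Ca_k^{\omega,\lambda}$ is a Fourier multiplier with symbol
\[ m_k(\xi,\lambda) \;=\; \sum_{2^k \leq |m| < 2^{k+1}} \frac{X_m}{S_m}\, e\bigl(\lambda m - \xi\, S_m\bigr). \]
On the full-measure event $\{S_m \asymp m^{1/d}\}$ given by the Law of Large Numbers, a second-moment computation yields $\mathbb E|m_k(\xi,\lambda)|^2 \lesssim 2^{-k/d}$ at each fixed $(\xi,\lambda)$; the mean $\mathbb E\, m_k \approx \sum_{m \approx 2^k} m^{-1} e(\lambda m - \xi\, m^{1/d})$ is controlled by van der Corput / stationary phase, which is where the hypothesis $\Lambda \cap (-\epsilon,\epsilon) = \emptyset$ is exploited to prevent degeneration of the symbol. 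The heart of the proof is to upgrade this pointwise-in-$(\xi,\lambda)$ bound to the almost sure \emph{uniform} estimate
\[ \sup_{\lambda \in \Lambda,\; \xi \in \mathbb T}\, |m_k(\xi,\lambda)| \;\lesssim\; k^{1/2}\, 2^{-k/(2d)}. \]
Since $m_k$ is Lipschitz of order $\lesssim 2^k$ in $\lambda$ (from the modulation) and $\lesssim 2^{k/d}$ in $\xi$ (from the shifts), a product net of $\Lambda \times \mathbb T$ at matched scales has cardinality $\lesssim 2^{k(\delta + 1/d)}$ by the Minkowski dimension hypothesis. A Bernstein/subgaussian deviation bound on each net point, a union bound, and Borel--Cantelli deliver the uniform estimate precisely because the subgaussian tail $\exp(-Ck)$ beats the polynomial net size once the $k^{1/2}$ loss is permitted.

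Plancherel together with a Sobolev-embedding-in-$\lambda$ argument across the net then produces the geometrically decaying single-scale estimate
\[ \bigl\lVert \sup_{\lambda \in \Lambda} |\Ca_k^{\omega,\lambda} f|\bigr\rVert_{\ell^2} \;\lesssim\; k^{1/2}\, 2^{k\delta/2 - k/(2d)} \,\lVert f\rVert_{\ell^2}, \]
which is summable in $k$ exactly when $\delta < 1/d$. After interpolation with the trivial $\ell^\infty$ bound coming from the absolute size of the kernel $X_m/S_m$, one obtains single-scale $\ell^r\to\ell^r$-improving estimates of the form required by the sparse-from-single-scale reduction of \S\ref{s:sparse}, finishing \eqref{e:Talpha_sparse}. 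The main obstacle is the uniform-in-$(\xi,\lambda)$ deviation bound: because the modulation parameter $m$ and the shift parameter $S_m \asymp m^{1/d}$ are decoupled, the phase $\lambda m - \xi\, S_m$ cannot be collapsed to a single one-parameter family, so one is forced to net both variables independently. This is precisely why the dimension threshold must tighten from $1-\alpha$ in Theorem \ref{random0} to the strictly smaller $1/d$ here, and is also the step that requires $\Lambda$ to be bounded away from the origin.
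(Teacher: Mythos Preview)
Your outline has the right macroscopic shape, but two steps do not go through as written.

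\textbf{The concentration step.} The claimed ``Bernstein/subgaussian deviation bound'' for $m_k(\xi,\lambda)$ at a fixed net point is not justified: the summands $\tfrac{X_m}{S_m}\,e(\lambda m-\xi S_m)$ are \emph{not} independent, since $S_m$ depends on all of $X_1,\dots,X_m$ and sits both in the denominator and inside the phase. Bernstein's inequality does not apply. The paper fixes this with the algebraic identity
\[
X_m\,g(S_m)\;=\;X_m\,g(S_{m-1}+1),
\]
valid because $X_m\in\{0,1\}$. Replacing $S_m$ by the predictable quantity $S_{m-1}+1$ makes each summand of the form $Y_m\cdot\tfrac{e(\lambda m+\theta(S_{m-1}+1))}{W_m}$ after centering, a genuine bounded martingale difference, and \emph{then} Azuma gives the subgaussian tail (Lemma~\ref{l:C3}). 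Without this device your deviation bound has no proof.

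\textbf{The ``mean'' term.} Your formula $\mathbb E\,m_k\approx\sum m^{-1}e(\lambda m-\xi m^{1/d})$ is not correct: $S_m$ is random inside the exponential and cannot simply be replaced by $W_m$. After the identity above, the compensator is the still-random sum in \eqref{e:C4}, and the paper does not attempt a uniform-in-$\xi$ multiplier bound for it via van der Corput. Instead it re-indexes by the value $j$ of the shift, writing the sum as $\sum_j A_j(\lambda)\bigl(f(x-j)-f(x+j)\bigr)$ with $A_j(\lambda)=\sum_{m=a_{j-1}}^{a_j-1}e(\lambda m)/m$, and then a Dirichlet-kernel analysis of the $A_j(\lambda)$ (Lemma~\ref{l:Aj}) reduces everything to the Hardy--Littlewood maximal function, except for a short piece handled by the crude Lemma~\ref{l:deterministic}. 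The hypothesis $\Lambda\cap(-\epsilon,\epsilon)=\emptyset$ is used only there, to make that last sum trivially finite; it plays no role in any stationary-phase argument. Incidentally, since $\alpha=1-1/d$ one has $1-\alpha=1/d$, so the dimension threshold does not ``tighten'' relative to Theorem~\ref{random0}; it is the same condition.
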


Our assumption that the set $ \Lambda $ is bounded away from the origin is rather severe. 
But, interestingly, removing this assumption would entail many extra subtleties, which we comment on at the end of the paper.

\bigskip 

We are inspired by the arithmetic ergodic theorems of Bourgain \cites{MR937582,MR1019960}. To explore the underlying complexity of these theorems, Bourgain studied the pointwise ergodic theorem formed from randomly selected subsets of the integers. In our notation, this lead to the study of maximal functions 
\begin{equation*}
\sup _{n} \Bigl\lvert 
\frac 1 {S_n} \sum _{m=1} ^{n }  X _{m} f (x-m)
\Bigr\rvert, 
\end{equation*}
for non-negative $ f\in \ell ^{p} (\mathbb Z )$.  This theme was studied by several authors 
\cites{MR1288356,MR3421994,MR1131794}, and we point in particular to the definitive results in the `lacunary' case \cite{MR1389623}.  
Our theorems are also closely related to the Wiener Wintner Theorem \cite{MR0004098}, which itself continues to have powerful and deep connections to ergodic theory \cites{MR958884,MR2246591} and harmonic analysis \cites{MR2420509,MR2881301}.  One can compare the results here with that of say \cite{14100806}, which obtains much stronger theorems, but for averages, as opposed to the singular sums of this paper.  

Our subject is also connected to the discrete harmonic analysis also inspired by Bourgain's arithmetic ergodic theorems, and promoted by Stein and Wainger \cites{MR1056560,MR1719802}.  This area remains quite active. Besides these older papers 
\cites{MR2188130,MR2318564}, the reader should also reference these very recent papers for interesting new developments \cites{MR2872554,151206918,160408695,151207524,151207523, 151207518}. 

The sparse bounds have been a recent and quite active topic in continuous harmonic analysis, 
see \cites{150105818,14094351,160305317} and references therein for a guide to this subject. Their appearance in the discrete settings is new. In particular, the weighted inequalities that are corollaries to our main theorem have very few precedents in the literature.

\smallskip 
The techniques of our proofs straddle (discrete) harmonic analysis and probability theory. We will use standard facts about maximal functions, and the Carleson theorem itself.  On the probability side, we reference standard large deviation inequalities for iid random variables, and martingales, to control random Fourier series.   
The method to obtain the sparse bounds is illustrated, in a simpler way, in the argument of \cite{lacey_spencer}, which also addresses random discrete inequalities.

\section{Preliminaries}

\subsection{Notation}

With $X_m$ as in \eqref{e:X},  we let $Y_m = X_m - \sigma_m$, and 
\[   W_m = \sum_{n=1}^m \sigma_m.\]
By the integral test, we note that $W_m = \frac{1}{1-\alpha} m^{1-\alpha} + O(1)$. 
 Moreover $ \textup{Var} (S_n) \leq W_n$.  And, it is well-known that, for any $\epsilon > 0$,
\begin{equation} \label{e:BC0}
|S_m - W_m| \lesssim m^{\epsilon + \frac{1- \alpha}{2}} 
\end{equation}

We will make use of the modified Vinogradov notation. We use $X \lesssim Y$, or $Y \gtrsim X$ to denote the estimate $X \leq CY$ for an absolute constant $C$. We use $X \approx Y$ as
shorthand for $Y \lesssim X \lesssim Y $.
We also make use of big-O notation: we let $O(Y)$ denote a quantity that is $\lesssim Y$. 

Since we will be concerned with establishing a priori $\ell^p(\Z)$-estimates in this paper, we will restrict every function considered to be a member of a ``nice'' dense subclass: each function on the integers will be assumed to have {finite support}.

\subsection{Fourier Transform}
As previously mentioned, we let $e(t):= e^{2\pi i t}$. 
The Fourier transform on $ f \in \ell ^2 (\mathbb Z )$ is defined by 
\begin{equation*}
\mathcal F  f (\beta ) = \sum_{n} f (n) e (- \beta n).  
\end{equation*}
This is a unitary map from $ \ell ^2 (\mathbb Z )$ to $ L ^{2} (\TT)$.  In particular, we have for convolution 
\begin{equation*}
\mathcal F  (f \ast g) = \mathcal F f \cdot \mathcal F g. 
\end{equation*}
In particular, all of our Theorems can be understood as maximal theorems over convolutions. It will be convenient to study the corresponding Fourier multipliers.  Indeed, the following technical   lemma exhibits  the way that  small Minkowski dimension is used. (It is so to speak a variant of Sobolev embedding, for sets of small Minkowski dimension.) 

\begin{lemma}[Lemma 2.4 of \cite{151206918}]\label{SOB}
Suppose   $\Lambda \subset [0,1]$ has upper Minkowski dimension at most $0 < d < 1$, as given in \eqref{e:Mink}
Suppose that $\{ T_\lambda : \lambda \in [0,1] \}$ is a family of operators so that for each $f \in \ell^2(\Z)$, $ T_\lambda f(x)$
is differentiable in $\lambda \in [0,1]$. Set
\begin{gather}\label{e:a}
 a := \sup_{\lambda \in [0,1]} \| T_\lambda \|_{\ell^2(\Z) \to \ell^2(\Z)}, 
\\  \label{e:A}
\textup{and} \qquad 
A := \sup_{\lambda \in [0,1]} \| \partial_\lambda T_\lambda \|_{\ell^2(\Z) \to \ell^2(\Z)}.
\end{gather}
Then we have the maximal inequality below 
\begin{equation}\label{e:SOB}
 \| \sup_\Lambda |T_\lambda f|\, \|_{\ell^2(\Z)} \lesssim C_d^{1/2} (a + a^{1-d/2} A^{d/2} ) \|f\|_{\ell^2(\Z)}. 
\end{equation}
\end{lemma}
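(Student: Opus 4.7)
The plan is a Sobolev-type argument: cover $\Lambda$ by a $\delta$-net, control the supremum over the net by a trivial union bound in $\ell^2$, and control the oscillation between net points by the derivative hypothesis \eqref{e:A}. Fix a parameter $0 < \delta \leq 1$ to be optimized at the end. By \eqref{e:Mink}, there is a cover $\Lambda \subset \bigcup_{j=1}^{N} I_j$ by intervals of length $\delta$ with $N \lesssim C_d \delta^{-d}$; discarding any $I_j$ not meeting $\Lambda$, I choose a sample point $\lambda_j \in I_j \cap \Lambda$. The basic decomposition is
\begin{equation*}
\sup_{\lambda \in \Lambda} |T_\lambda f(x)| \leq \max_{j} |T_{\lambda_j} f(x)| + \max_{j} \sup_{\lambda \in I_j} |T_\lambda f(x) - T_{\lambda_j} f(x)|.
\end{equation*}

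For the sampling piece, dominating the maximum by the $\ell^2$-sum gives
\begin{equation*}
\bigl\| \max_j |T_{\lambda_j} f| \bigr\|_{\ell^2}^2 \leq \sum_{j=1}^{N} \|T_{\lambda_j} f\|_{\ell^2}^2 \leq N a^2 \|f\|_{\ell^2}^2 \lesssim C_d \delta^{-d} a^2 \|f\|_{\ell^2}^2 .
\end{equation*}
For the oscillation piece, the fundamental theorem of calculus and Cauchy--Schwarz give, for $\lambda \in I_j$,
\begin{equation*}
|T_\lambda f(x) - T_{\lambda_j} f(x)|^2 \leq \delta \int_{I_j} |\partial_\mu T_\mu f(x)|^2 \, d\mu,
\end{equation*}
and then summing over $j$ and taking $\ell^2$ in $x$ yields
\begin{equation*}
\Bigl\| \max_j \sup_{\lambda \in I_j} |T_\lambda f - T_{\lambda_j} f| \Bigr\|_{\ell^2}^2 \leq \delta \sum_{j=1}^{N} \int_{I_j} \|\partial_\mu T_\mu f\|_{\ell^2}^2 \, d\mu \leq N \delta^2 A^2 \|f\|_{\ell^2}^2 \lesssim C_d \delta^{2-d} A^2 \|f\|_{\ell^2}^2 .
\end{equation*}

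Combining these two estimates gives
\begin{equation*}
\bigl\| \sup_{\lambda \in \Lambda} |T_\lambda f| \bigr\|_{\ell^2} \lesssim C_d^{1/2} \delta^{-d/2} ( a + \delta A ) \|f\|_{\ell^2} ,
\end{equation*}
and the proof concludes by optimizing in $\delta$. The balance point is $\delta = a/A$: if $a \leq A$ take $\delta = a/A \leq 1$, which yields $C_d^{1/2} a^{1-d/2} A^{d/2}$; if $a > A$ take $\delta = 1$, which yields $C_d^{1/2} a$. In either case the bound \eqref{e:SOB} follows. There is no serious obstacle here, only the algebraic optimization and the endpoint issue at $\delta = 1$; the argument is the standard discrete analogue of the Sobolev embedding $H^{d/2+\varepsilon} \hookrightarrow L^\infty$, adapted so that the cardinality of the $\delta$-net is measured by Minkowski dimension rather than Lebesgue length.
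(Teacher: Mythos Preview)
Your proof is correct. The paper does not actually prove this lemma; it is quoted verbatim as Lemma~2.4 of \cite{151206918}, and your argument is precisely the standard Sobolev-embedding proof one would expect there: cover $\Lambda$ by $\lesssim C_d\delta^{-d}$ intervals of length $\delta$, control the net by the square-function bound and the oscillation by the fundamental theorem of calculus plus Cauchy--Schwarz, then balance $\delta = a/A$.
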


In application, the quantities in \eqref{e:a} and \eqref{e:A} are estimated on the Fourier side.  
This will be used in settings where $ a \ll 1$ and $  1 \ll A \ll  a ^{-m}$, for some large integer $ m$. 
Then, for $ 0< d < 1/m$ sufficiently small, the right side of \eqref{e:SOB} will be small.  

\subsection{Sparse Operators}\label{s:sparse}

A \emph{sparse}   collection of intervals $ \mathcal S$  satisfy this essential condition: 
There is a collection of pairwise disjoint sets of the integers   $ \{E (I) \;:\; I\in \mathcal S\}$ so that $ \lvert  E (I)\rvert > \tfrac 1 {10}  \lvert  I\rvert $ for all $ I\in \mathcal S$.   A \emph{sparse bilinear form} is defined in terms of a choice of index $ 1\leq r < \infty $, and a sparse collection of intervals $ \mathcal S$.  Define 
\begin{equation}\label{e:SP_def}
\begin{split}
\Pi _{\mathcal S, r} (f,g) &= \sum_{I\in \mathcal S} \langle f \rangle_{I,r} \langle g \rangle_{I,r} \lvert  I\rvert 
\\
\langle f \rangle_{I,r}  & := \Bigl[ \lvert  I\rvert ^{-1} \sum_{n\in I} \lvert  f (n)\rvert  ^{r} \Bigr] ^{1/r}. 
\end{split}
\end{equation}
If the role of the sparse collection is not essential, it will be suppressed in the notation.

Sparse bounds are known for some operators $ T$, taking this form:   For all $ f, g$ finitely supported on $ \mathbb Z $, there is a choice of sparse operator $ \Pi _{r}$ so that 
\begin{equation}\label{e:Sr}
   \lvert  \langle Tf,g \rangle\rvert \lesssim \Pi _{r} (f,g),  
\end{equation}
where the implied constant is independent of $ f, g$.   We refer to this as \emph{the sparse property of index $ r$,} and write $ T \in \textup{Sparse}_r$

\begin{theorem}\label{t:sparse} We have these sparse bounds.  
\begin{enumerate}
\item For the maximal function $ M$, we have $ M \in \textup{Sparse}_1$.  
\item   For the Carleson operator $ C$ of  \eqref{e:Carleson}, we have for all $ 1< r < 2$, that $ C\in \textup{Sparse}_r$.  
\end{enumerate}
\end{theorem}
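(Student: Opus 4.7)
The plan is to prove both sparse bounds by the standard Calder\'on--Zygmund stopping-time argument that underlies most of the sparse-domination literature. Fix finitely supported $f, g$ and a large dyadic interval $I_0$ containing both supports, and initialize $\mathcal{S} = \{I_0\}$. Recursively, for each $I \in \mathcal{S}$, select the maximal dyadic subintervals $J \subsetneq I$ witnessing ``bad'' behavior of $f$, $g$, or an appropriate maximal truncation (specified below), and adjoin them to $\mathcal{S}$. Setting $E(I) := I \setminus \bigcup_{J \in \mathrm{ch}(I)} J$, the two things to verify are: (a) sparseness, $|E(I)| \geq \tfrac{9}{10}|I|$, which reduces to a weak-type bound on the relevant maximal operator applied to $f \mathbf{1}_I, g\mathbf{1}_I$ with sufficiently large stopping constant $C_0$; and (b) the local estimate $|\langle Tf, g\mathbf{1}_{E(I)}\rangle| \lesssim \langle f\rangle_{I,r}\langle g\rangle_{I,r} |I|$. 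Summing (b) over $\mathcal{S}$ yields \eqref{e:Sr}.

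Part (1) is the model case. With $T = M$ and $r = 1$, the bad children are those $J$ with $\langle f \rangle_{J,1} > C_0 \langle f \rangle_{I,1}$ or $\langle g \rangle_{J,1} > C_0 \langle g \rangle_{I,1}$; sparseness is immediate from the weak-$(1,1)$ inequality for the Hardy--Littlewood maximal function. The local bound is essentially pointwise: for $x \in E(I)$, the stopping rule telescoped up the $\mathcal{S}$-tree shows that every dyadic $J \ni x$ satisfies $\langle f \rangle_{J,1} \leq C_0 \langle f \rangle_{I(x),1}$, hence $Mf(x) \lesssim \langle f \rangle_{I,1}$, while $|g(x)| \lesssim \langle g\rangle_{I,1}$ at the terminal scale. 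Multiplying and summing over $E(I) \subset I$ gives (b).

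Part (2) is the genuine content. Linearize $C = C_{\lambda(\cdot)}$ by a measurable selection of the optimizing frequency $\lambda \colon \Z \to [0,1]$, so that $C$ becomes a linear operator bounded on $\ell^p$ for all $1 < p < \infty$ by Hunt's extension of Theorem \ref{disc}. Introduce the grand maximal truncation
\[
 \mathcal{M}^{\#} f(x) := \sup_{I \ni x}\,\sup_{y \in I}\,\bigl|C_{\lambda}\bigl(f\,\mathbf{1}_{(3I)^c}\bigr)(y)\bigr|,
\]
which, by a telescoping/localization argument leveraging the $\ell^r$ bound on $C$, obeys a weak-$(r, r)$ inequality for every $r > 1$. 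Augment the stopping-time selection with a third class of bad children, namely maximal $J \subset I$ on which $\mathcal{M}^{\#} f$ exceeds $C_0 \langle f\rangle_{I,r}$ on a substantial fraction of $J$. Sparseness then follows from the weak-$(r,r)$ bounds of $\mathcal{M}^{\#}$ together with the $\ell^r$-maximal functions of $f, g$. For the local estimate, split $Cf = C(f\mathbf{1}_{3I}) + C(f\mathbf{1}_{(3I)^c})$: the tail is dominated pointwise on $E(I)$ by $\mathcal{M}^{\#} f \lesssim \langle f\rangle_{I,r}$, while the local piece is handled by H\"older and the $\ell^p$-boundedness of $C$.

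The main obstacle is upgrading the naive $(r, r')$ bound to the symmetric $(r, r)$ bound: applying H\"older between $\|C(f\mathbf{1}_{3I})\|_{r'}$ and $\|g\mathbf{1}_{E(I)}\|_r$ yields $\langle f\rangle_{I,r}\langle g\rangle_{I,r'}|I|$ rather than the desired $\langle f\rangle_{I,r}\langle g\rangle_{I,r}|I|$. The standard remedy is to run a symmetric stopping time on $g$, paired with a dualized grand maximal truncation $\mathcal{M}^{\#,*}$ adapted to $C^*$, and to iterate the selection on both sides simultaneously. This is precisely where the restriction $r < 2$ enters: it guarantees that both $\ell^r$ and $\ell^{r'}$ are on the accessible side of the self-dual exponent where $C$ is bounded, so the symmetrized stopping construction can close.
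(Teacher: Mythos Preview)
The paper does not actually prove this theorem; it simply remarks that the maximal function case is ``very easy'' and cites Lerner \cite{MR3484688}*{Theorem 4.6} for the Carleson operator. So there is no detailed argument to compare against, and your Part~(1) sketch is a perfectly acceptable account of the standard construction.

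Your Part~(2), however, has a genuine gap in its closing paragraph. The ``main obstacle'' you identify is real for the bilinear route you have chosen, but the ``standard remedy'' you describe is not standard and, as written, does not close. Running a symmetric stopping time on $g$ with a dualized grand maximal truncation $\mathcal{M}^{\#,*}$ does not by itself convert an $(r,r')$ local estimate into an $(r,r)$ one; you would still be pairing an $\ell^{r'}$ norm of $C(f\mathbf 1_{3I})$ against an $\ell^r$ norm of $g$ on the local piece, and nothing in the symmetric selection changes that H\"older pairing. Moreover, your explanation of why $r<2$ enters is incorrect: the Carleson operator is bounded on $\ell^p$ for \emph{all} $1<p<\infty$, so $r$ and $r'$ are always simultaneously ``accessible''. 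The restriction $1<r<2$ in the statement is not a limitation of the method but simply the range relevant to the applications in this paper; in fact the sparse bound for larger $r$ is weaker and follows trivially from the bound for smaller $r$.

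The route the paper cites avoids this difficulty entirely. Lerner's argument produces a \emph{pointwise} sparse domination
\[
|Cf(x)| \lesssim \sum_{I\in\mathcal S} \langle f\rangle_{I,r}\,\mathbf 1_I(x),
\]
valid for every $r>1$, from the weak-$(r,r)$ bound for the grand maximal truncation. Pairing this pointwise inequality against $|g|$ gives the bilinear $(r,1)$ bound, which is strictly stronger than the $(r,r)$ bound you are after, and no symmetric stopping or duality trick is needed.
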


The bound for the maximal function is very easy, and not that sharp. The bound for the Carleson operator follows for instance from \cite{MR3484688}*{Theorem 4.6}.  
One of the fascinating things about the sparse bound is that they easily imply weighted inequalities.  

  For non-negative function $ w$, we define the Muckenhoupt $ A_p$ and \emph{reverse H\"older} characteristics by 
\begin{gather*}
[w] _{A_p} = \sup _{Q} \Bigl[\frac { w ^{\frac 1 {1-p}} (Q) } {\lvert  Q\rvert }\Bigr] ^{p-1} \frac {w (Q)} {\lvert  Q\rvert  } < \infty 
\\
[w] _{RH_p} = \sup _{Q} \frac { \langle w ^{p} \rangle_Q ^{1/p}} {\langle w \rangle_Q} < \infty 
\end{gather*}
Above, we are conflating $ w$ as a measure and a density, thus $w ^{\frac 1 {1-p}} (Q)  =\int _{Q} w (x) ^{\frac 1 {1-p}} \;dx $.  
And, we are stating the definition as if it on Euclidean space, but the theory transfers to the integers in a straight forward way. 
 We have these estimates, a corollary to \cite{MR3531367}*{Prop 6.4}.   

\begin{theorem}\label{t:wts}   For all $ 1< r < 2$,  $ r< p < r'$ and weights $ w$ there holds 
\begin{equation}  \label{e:wts}
\Pi _r (f, g) \leq  C ([w] _{A_ {p/r}}, [w ] _{RH _{r/ (r- p (r-1))}})
\lVert f\rVert _{ \ell ^{p} (w)} \lVert g\rVert _{ \ell  ^{p} (w)}. 
\end{equation}
\end{theorem}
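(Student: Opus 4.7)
The plan is to dominate the sparse form pointwise by a product of two Hardy--Littlewood type maximal functions using the disjointness of the sparse sets $E(I)$, and then to separate the pairing by a weighted Hölder inequality, invoking the weighted boundedness of the maximal operator whose constants are captured by the two weight characteristics. As the first step, I would use $|E(I)| > \tfrac 1 {10}|I|$ together with the pairwise disjointness of $\{E(I) : I\in \mathcal S\}$ to obtain
\[
 \Pi_{\mathcal S, r}(f,g) \leq 10 \sum_{I\in \mathcal S} \langle f\rangle_{I,r}\langle g\rangle_{I,r}|E(I)| \leq 10\sum_{n\in \mathbb Z} M_r f(n)\cdot M_r g(n),
\]
where $M_r h := (M(|h|^r))^{1/r}$ is the $r$-Hardy--Littlewood maximal function. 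The second inequality holds because at each $n$ at most one $I\in \mathcal S$ satisfies $n\in E(I)$, and for that $I$ (which contains $n$ since $E(I)\subset I$) one has $\langle f\rangle_{I,r}\leq M_r f(n)$ and likewise for $g$.

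Next, I would separate the bilinear pairing by a weighted Hölder inequality. Writing
\[
\sum_n M_r f(n) \cdot M_r g(n) = \sum_n \bigl(M_r f\cdot w^{1/p}\bigr)(n)\bigl(M_r g\cdot w^{-1/p}\bigr)(n)
\]
and applying Hölder with exponents $(p,p')$ bounds the right side by $\|M_r f\|_{\ell^p(w)} \cdot \|M_r g\|_{\ell^{p'}(\sigma)}$, where $\sigma:=w^{1-p'}$ is the standard dual weight. The condition $w\in A_{p/r}$ is equivalent to boundedness of $M_r$ on $\ell^p(w)$ (via Muckenhoupt's theorem applied to $M$ on $\ell^{p/r}(w)$ acting on $|f|^r$), and the reverse Hölder condition $RH_{r/(r-p(r-1))}$ is precisely calibrated so that $\sigma$ lies in $A_{p'/r}$ with characteristic quantitatively controlled by those of $w$, which makes $M_r$ also bounded on $\ell^{p'}(\sigma)$.

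The main technical point is the weight-class identification: verifying that $r/(r-p(r-1))$ is the exact reverse Hölder threshold at which $\sigma=w^{1-p'}$ lands in $A_{p'/r}$ with a quantitatively controlled characteristic. This is a direct consequence of the Johnson--Neugebauer-type calibration of $A_\infty$ weights in terms of $A_p$ and $RH_s$ classes, and it is the calculation underlying \cite{MR3531367}*{Prop.~6.4}. Combining the two weighted maximal inequalities with the Hölder splitting then yields the stated estimate, with a constant polynomial in $[w]_{A_{p/r}}$ and $[w]_{RH_{r/(r-p(r-1))}}$. No discrete-specific considerations arise, as the Hardy--Littlewood--Muckenhoupt theory transfers to $\mathbb Z$ verbatim.
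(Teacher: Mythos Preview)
Your argument is correct. The paper itself does not supply a proof of this theorem; it simply records the estimate as a corollary of \cite{MR3531367}*{Prop.~6.4} and remarks that the sharp constant is computed there but is not tracked. Your outline is precisely the qualitative form of the Bernicot--Frey--Petermichl argument: pass from $|I|$ to $|E(I)|$ using sparseness, dominate pointwise by $M_r f \cdot M_r g$ via the disjointness of the $E(I)$, split with weighted H\"older, and invoke Muckenhoupt's theorem for $M$ on $\ell^{p/r}(w)$ and on $\ell^{p'/r}(\sigma)$. The identification $(r'/p)' = r/(r-p(r-1))$ is exactly the reverse H\"older index appearing in the cited proposition, and the step ``$w\in A_{p/r}\cap RH_{(r'/p)'} \Rightarrow \sigma\in A_{p'/r}$'' is the Johnson--Neugebauer computation you reference. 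So there is no genuine difference in approach: you have written out what the paper leaves to the citation, at the cost of not recovering the sharp dependence on the characteristics (which the paper explicitly declines to track anyway).

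One small remark: you tacitly use $E(I)\subset I$ in order to conclude $\langle f\rangle_{I,r}\le M_r f(n)$ for $n\in E(I)$. The paper's definition of sparseness does not state this containment explicitly, but it is the universal convention and the argument would be vacuous without it.
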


The sharp bound for the constant on the right is computed in \cite{MR3531367}. There is little doubt that the results of this paper can be improved, so we don't track that constant.

\section{The Proof of Theorem \ref{random0}}
Theorem \ref{random0} concerns the maximal function in \eqref{e:Tdef}.   For fixed $ \lambda $ the summands in \eqref{e:Tdef} consist in part of  
\begin{align} \label{e:R0}
X_{|m|} \frac{e(\lambda m)}{S_{|m|}}  & =  c_\alpha \frac{e(\lambda m)}{|m|}  
\\  \label{e:R1} 
&\quad +   \Bigl[ \frac{ \sigma _ {\lvert  m\rvert }} {W_{|m|}} - \frac {c_\alpha} {\lvert  m\rvert } \Bigr]    {e(\lambda m)} 
\\  \label{e:R2}
&\quad + Y_ {\lvert  m\rvert }  \frac{e(\lambda m)}{W_{|m|}}  
\\ &  \label{e:R3}
\quad + 
X_{|m|}{e(\lambda m)} \Bigl[\frac 1 {S_{|m|}}  -  \frac 1 {W _{\lvert  m\rvert }}  \Bigr] 
\end{align}
This leads to the decomposition of the maximal operator in \eqref{e:Tdef}, upon multiplying each term by $\sgn(m)$. We will address them in  order, with the restriction on Minkowski dimension arising from only the term in \eqref{e:R2}.

The first and most significant term is associated with \eqref{e:R0}, which is entirely deterministic, and in fact the associated maximal function is exactly  the Carleson Theorem \ref{disc}, hence  we have the sparse bound from Theorem~\ref{t:sparse}.  The relevant sparse bound is  
$ C \in \textup{Sparse}_r$, for all $ 1< r < 2$.  
The second term \eqref{e:R1} is entirely trivial. As follows from \eqref{e:BC0}, we have almost surely 
\begin{align} \label{e:R1<}
\Bigl\lvert 
\frac { \sigma _m} {W _{ m }} -  \frac { c _{\alpha }} {m} 
\Bigr\rvert 
& = \frac {\lvert   m ^{1- \alpha } - c _{\alpha }W _{m}\rvert } { W_m \cdot m} \lesssim m ^{-2  }. 
\end{align}
Convolution with $ \frac 1 {m ^2 }$ is easily seen to satisfy a sparse bound, with $ r=1$.  

The third  term   \eqref{e:R2} is the one that imposes a condition on $ \Lambda $, the set that defines the maximal operator. 
We have this important  Lemma, which controls a relevant maximal function in $ \ell ^2 $-norm.   Define 
\begin{equation}\label{e:Pk}
P_k f := 
\sup _{\lambda \in \Lambda }  
\Bigl\lvert 
\sum_{m \;:\; 2 ^{k} \leq \lvert  m\rvert  < 2 ^{k+1}}
 Y_ { m }  \frac{e(\lambda m)}{W_{m}}   f (x-m) \Bigr\rvert.  
\end{equation}

\begin{lemma}\label{l:R2}   Assume that $ \Lambda $   has Minkowski dimension strictly less than $   {1 - \alpha } $. 
Then there is a positive choice of $ \eta = \eta (\alpha )>0$, so  for all integers $ k \in \mathbb N $, we have almost surely 
\begin{gather}  \label{e:R1<}
\sup _{k \in \mathbb N }  \lVert P_k \;:\; \ell ^1  \to \ell ^1 \rVert +  \lVert P_k \;:\; \ell ^ \infty   \to \ell ^ \infty  \rVert < \infty , 
\\
\label{e:R2<}
\sup _{k \in \mathbb N } 2 ^{\eta k} \lVert P_k \;:\; \ell ^2 \to \ell ^2 \rVert < \infty . 
\end{gather}
\end{lemma}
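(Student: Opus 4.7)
The plan is to handle each of the three bounds separately. The $\ell^1$ and $\ell^\infty$ endpoint bounds of \eqref{e:R1<} do not require the Minkowski dimension hypothesis and reduce to concentration estimates for random sums, while the $\ell^2$ decay \eqref{e:R2<} is the heart of the matter and will be proved via Lemma~\ref{SOB}.

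For the $\ell^1 \to \ell^1$ bound, observe that $|Y_m e(\lambda m)/W_m| = |Y_m|/W_m$ is independent of $\lambda$, so the trivial pointwise estimate $|P_k f(x)| \leq \sum_{m \sim 2^k} (|Y_m|/W_m)|f(x-m)|$ gives $\lVert P_k\rVert_{\ell^1 \to \ell^1} \leq S_k := \sum_{2^k \leq |m| < 2^{k+1}} |Y_m|/W_m$. Using $\EE[|Y_m|] = 2\sigma_m(1-\sigma_m) \lesssim \sigma_m$ and $\sigma_m/W_m \approx 1/m$ yields $\EE[S_k] \lesssim 1$. The summands are bounded by $W_m^{-1} \approx 2^{-k(1-\alpha)}$ and $\Var(S_k) \lesssim 2^{k(\alpha-1)}$, so Bernstein's inequality combined with Borel--Cantelli gives $\sup_k S_k < \infty$ almost surely.

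For the $\ell^\infty \to \ell^\infty$ bound, $\lVert P_k f\rVert_\infty \leq M_k \lVert f\rVert_\infty$, where $M_k := \sup_{\lambda \in \Lambda} |\sum_{m \sim 2^k} Y_m e(\lambda m)/W_m|$. I would discretize $\lambda$ on a $2^{-k}$-grid of $[0,1]$; at each grid point the sum has variance $\lesssim 2^{k(\alpha-1)}$, so sub-Gaussian concentration and a union bound over $O(2^k)$ points yield $\max_j |\cdot| \lesssim 2^{-k\eta_0}$ with summable failure probability, for any $\eta_0 < (1-\alpha)/2$. A parallel bound on the $\lambda$-derivative (whose Fourier multiplier has variance $\lesssim 2^{k(1+\alpha)}$) controls the passage from discrete to continuous sup. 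Thus $M_k \lesssim 2^{-k\eta_0}$ almost surely, which is already stronger than uniform boundedness.

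For the $\ell^2 \to \ell^2$ decay, I apply Lemma~\ref{SOB} to the family $T_\lambda f(x) = \sum_{m \sim 2^k} Y_m e(\lambda m) f(x-m)/W_m$. Its Fourier multiplier is $\widehat{K}_\lambda(\xi) = \sum_{m \sim 2^k} Y_m e(m(\lambda - \xi))/W_m$, which depends on $\lambda,\xi$ only through $\mu = \lambda - \xi$. Hence $a := \sup_\lambda \lVert T_\lambda\rVert_{\ell^2 \to \ell^2}$ equals $\sup_\mu |\sum Y_m e(m\mu)/W_m|$, bounded by $2^{-k\eta_0}$ almost surely for $\eta_0 < (1-\alpha)/2$, exactly as in the $\ell^\infty$ step. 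The derivative multiplier (additional factor of $2\pi i m$) gives $A := \sup_\lambda \lVert \partial_\lambda T_\lambda\rVert_{\ell^2 \to \ell^2} \lesssim 2^{k(1+\alpha)/2 + \epsilon}$ almost surely. Lemma~\ref{SOB} then delivers $\lVert P_k\rVert_{\ell^2 \to \ell^2} \lesssim a + a^{1-d/2} A^{d/2}$; a short exponent computation shows the right-hand side is $\lesssim 2^{-k\eta}$ for some $\eta > 0$ exactly when $d < 1-\alpha$.

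The main obstacle is the saturation of the hypothesis $d < 1 - \alpha$ in the exponent balance: to produce any positive decay $\eta$, one needs $\eta_0$ arbitrarily close to the borderline $(1-\alpha)/2$ for Bernstein concentration of $\sum Y_m e(m\mu)/W_m$, and every probabilistic estimate must survive $\sup_k$ through Borel--Cantelli, so failure probabilities must be summable in $k$. There is little room to maneuver between the grid size, the concentration level, and the derivative bound --- matching precisely the arithmetic threshold $d < 1 - \alpha$.
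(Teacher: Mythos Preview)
Your $\ell^1$ argument and your $\ell^2$ argument via Lemma~\ref{SOB} are correct and are essentially the paper's proof: the paper invokes the Strong Law of Large Numbers to bound $\sum_{m\sim 2^k}|Y_m|/W_m$ uniformly in $k$, and then applies Lemma~\ref{SOB} with $a\approx 2^{k(\alpha-1)/2}$ and $A\approx 2^{k(\alpha+1)/2}$ (obtained by sub-Gaussian concentration on a $2^{-k}$-grid plus Borel--Cantelli), arriving at exactly the exponent inequality $(\alpha-1)(1-d/2)+(\alpha+1)d/2<0$, i.e.\ $d<1-\alpha$.

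There is, however, a genuine slip in your $\ell^\infty$ step. The inequality $\lVert P_k f\rVert_\infty \le M_k\lVert f\rVert_\infty$ with
\[
M_k=\sup_{\lambda\in\Lambda}\Bigl|\sum_{m\sim 2^k}\frac{Y_m e(\lambda m)}{W_m}\Bigr|
\]
is \emph{false}: $M_k$ is (essentially) the supremum of the Fourier multiplier, which controls the $\ell^2\to\ell^2$ norm of each $T_\lambda$, not the $\ell^\infty\to\ell^\infty$ norm. For a convolution kernel $K_\lambda(m)=Y_m e(\lambda m)/W_m$ one has $\lVert T_\lambda\rVert_{\ell^\infty\to\ell^\infty}=\lVert K_\lambda\rVert_{\ell^1}=\sum_{m\sim 2^k}|Y_m|/W_m$, which is independent of $\lambda$ and need not be comparable to $|\widehat{K_\lambda}(0)|$. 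Concretely, taking $f(y)=e(-\lambda_0 y)$ with $\|f\|_\infty=1$ gives $T_{\lambda_0}f(x)=e(-\lambda_0 x)\sum_{m}Y_m/W_m$, which is not controlled by $M_k$ unless $0\in\Lambda$.

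The fix is immediate and is what the paper does: the same quantity $S_k=\sum_{m\sim 2^k}|Y_m|/W_m$ that you used for $\ell^1$ also bounds $\lVert P_k\rVert_{\ell^\infty\to\ell^\infty}$, since
\[
P_k f(x)\le \sum_{m\sim 2^k}\frac{|Y_m|}{W_m}\,|f(x-m)|\le S_k\,\lVert f\rVert_\infty.
\]
So your Bernstein/Borel--Cantelli control of $S_k$ already gives both endpoints at once, and the elaborate discretization argument you wrote for $\ell^\infty$ is unnecessary (though that argument is precisely what one needs, and what you correctly reuse, for the $\ell^2$ multiplier bound $a$).
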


\begin{proof}
The first claim is a consequence of the Strong Law of Large Numbers. Note that 
\begin{equation*}
 \lVert P_k \;:\; \ell ^1  \to \ell ^1 \rVert  
 \lesssim 
 \sum_{m \;:\; 2 ^{k} \leq \lvert  m\rvert  < 2 ^{k+1}}
  \frac{ \lvert  Y_ { m } \rvert }{W_{m}}  . 
\end{equation*}
And, the latter is uniformly bounded almost surely.

The $ \ell ^2 $ estimate, which has a gain in operator norm,   is a consequence of Lemma \ref{SOB},  which bounds supremums like those in \eqref{e:R2<} in terms of the $ \ell ^{\infty }$ norm of the multipliers, and the derivatives of the multipliers.   Due to the form of the sums, the multipliers are translations  by $ \lambda \in \Lambda $ of the functions of $  \theta $  below.  
Now, Lemma \ref{SOB} requires two estimates, the first is the $ L ^{\infty } (d \theta )$ estimate, for which we have 
\begin{equation}\label{e:R2A}
\mathbb P 
\Bigl(
\Bigl\lVert 
\sum_{m \;:\; 2 ^{k} \leq  \lvert  m\rvert  < 2 ^{k+1}}
 Y_ { m }  \frac{e(\theta m)}{W_{m}}   
\Bigr\rVert _{\infty } > C\sqrt  k \cdot 2 ^{ k (  \alpha -1)/2} 
\Bigr) \leq 2 ^{-k}, 
\end{equation}
for appropriate constant $ C$.  (We will recall a  proof in  Lemma \ref{l:C3} below.) 
We also need an estimate for the derivative in $ \theta $ of the functions above, which is clearly of the form 
\begin{equation}\label{e:R2B}
\mathbb P 
\Bigl(
\Bigl\lVert 
\sum_{m \;:\; 2 ^{k} \leq  \lvert  m\rvert  < 2 ^{k+1}}
 m Y_ { m }  \frac{e(\theta m)}{W_{m}}   
\Bigr\rVert _{\infty }  > C  \sqrt k \cdot  2 ^{ k (  \alpha +1)/2}  
\Bigr) \leq  2 ^{-k}
\end{equation}
By the Borel-Cantelli Lemma, we see that the union of these two events occur finitely often, almost surely. 

Apply \eqref{e:SOB}, with $ a =  \sqrt  k \cdot 2 ^{ k (  \alpha -1)/2} $ and $ A = \sqrt k \cdot  2 ^{ k (  \alpha +1)/2} $. 
We  see that the conclusion of the Lemma holds provided 
\begin{align*}
 (\alpha -1) (1- d/2)  + (\alpha +1)d /2   < 0
\end{align*}
where $ \alpha $ is the constant associated to the selector random variables, and $ d$ is the Minkowski dimension of $ \Lambda $. 
This inequality is true for $ d <   1- \alpha $. This completes the proof.  

\end{proof}

The previous Lemma implies the $ \ell ^{p}$-controll of the term  associated to \eqref{e:R2}, after a straight forward interpolation between \eqref{e:R1<} and \eqref{e:R2<}.  We turn to the sparse bound. 

\begin{corollary}\label{c:Pk} There is a $ \eta >0$, so that almost surely, there is a finite constant $ C _{\omega } >0$ so that  we have for all integers $ k$, intervals $ I$ of length $ 2 ^{k}$, 
and  functions $ f, g$  supported on $ I$,  
\begin{equation} \label{e:cPk}
\lvert  \langle  P_k f , g \rangle \rvert \leq C _{\omega }
2 ^{ - \eta 'k}
 \langle f \rangle _{3I,r} \langle g \rangle _{I, r} \lvert  I\rvert    \qquad  0 < 2 - r < c (\eta , \delta ). 
\end{equation}
The constants  $  c (\eta , \delta )$ and $ \eta ' = \eta ' (\eta , \delta ,r  )$  are positive, sufficiently small constants. 
\end{corollary}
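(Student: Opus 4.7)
The plan is to derive the claimed local sparse bound by geometrically interpolating two estimates on $P_k$: the $\ell^2 \to \ell^2$ bound with gain $2^{-\eta k}$ from Lemma \ref{l:R2}, and a deterministic $\ell^1 \to \ell^\infty$ bound reflecting the pointwise size of the kernel. Since $|Y_m|/W_m \lesssim 2^{-k(1-\alpha)}$ uniformly on the block $2^k \leq |m| < 2^{k+1}$, the pointwise majorization $P_k f \leq K * |f|$, with $K(m) := (|Y_m|/W_m) \mathbf{1}_{[2^k, 2^{k+1})}(|m|)$, immediately yields the deterministic estimate $\|P_k f\|_\infty \lesssim 2^{-k(1-\alpha)} \|f\|_1$. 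Hence, for indicators $f = \mathbf{1}_E$, $g = \mathbf{1}_F$ with $E \subset 3I$, $F \subset I$, we have the pair
\begin{equation*}
|\langle P_k \mathbf{1}_E, \mathbf{1}_F\rangle| \lesssim 2^{-\eta k} |E|^{1/2} |F|^{1/2}, \qquad |\langle P_k \mathbf{1}_E, \mathbf{1}_F\rangle| \lesssim 2^{-k(1-\alpha)} |E| |F|.
\end{equation*}

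Taking a geometric mean of these with weight $\theta \in (0, 1)$ and rewriting in densities $u := |E|/|3I|$, $v := |F|/|I|$, using $|I| = 2^k$, yields
\begin{equation*}
|\langle P_k \mathbf{1}_E, \mathbf{1}_F\rangle| \lesssim 2^{-k[\eta - \theta(\eta+\alpha)]} u^{(1+\theta)/2} v^{(1+\theta)/2} |I|.
\end{equation*}
Setting $r := 2/(1+\theta)$ gives $(1+\theta)/2 = 1/r$, so the right-hand side is $2^{-\eta_0' k} \langle f\rangle_{3I, r} \langle g\rangle_{I, r} |I|$ with $\eta_0' := \eta - \theta(\eta + \alpha)$. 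This decay exponent is strictly positive exactly when $r > r^* := 2(\eta+\alpha)/(2\eta+\alpha)$, and since $\eta > 0$ we have $r^* < 2$, providing a nontrivial admissible window for $r$ slightly below $2$. This is the quantitative form of the restriction $2 - r < c(\eta, \delta)$ (recall $\eta$ is supplied by Lemma \ref{l:R2} as a function of $\alpha$ and the Minkowski dimension $\delta$).

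To pass from indicators to arbitrary $f, g$, perform a level-set decomposition: write $f = \sum_{j \geq 0} f_j$ with $f_0 := f \mathbf{1}_{\{|f| \leq 2 \langle f\rangle_{3I, r}\}}$ and, for $j \geq 1$, $f_j := f \mathbf{1}_{\{|f|/\langle f\rangle_{3I, r} \in (2^j, 2^{j+1}]\}}$, so that Chebyshev gives $|\supp(f_j)|/|3I| \lesssim 2^{-jr}$. Decompose $g$ analogously. Expanding $\langle P_k f, g\rangle$ by bilinearity produces a double sum over $j, l$ of indicator-level contributions; the key maneuver is to apply the preceding indicator bound at a \emph{slightly smaller} index $r_1 \in (r^*, r)$ rather than at $r$. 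Then the Chebyshev factor contributes $(|\supp(f_j)|/|3I|)^{1/r_1} \lesssim 2^{-jr/r_1}$, so each $(j,l)$ slice carries a net geometric weight $2^{(j+l)(1 - r/r_1)} < 1$, making the double sum converge to a bounded multiple of $\langle f\rangle_{3I, r} \langle g\rangle_{I, r} |I|$.

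Setting $\eta' := \eta_0'(r_1) = \eta - \frac{2 - r_1}{r_1}(\eta + \alpha) > 0$ then completes the proof. The principal bookkeeping obstacle is the simultaneous satisfaction of $r^* < r_1 < r < 2$ together with $\eta_0'(r_1) > 0$; the availability of such $r_1$ is precisely the constraint $2 - r < c(\eta, \delta)$ in the statement. Everything else is routine: the $\ell^2$ gain is handed down by Lemma \ref{l:R2}, the $\ell^1 \to \ell^\infty$ bound is kernel-level and deterministic, and both the geometric mean and the Chebyshev-based level-set sum are standard maneuvers.
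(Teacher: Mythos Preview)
Your proof is correct and follows essentially the same approach as the paper. Both arguments interpolate between the $\ell^2$ bound with decay $2^{-\eta k}$ from Lemma~\ref{l:R2} and a crude $\ell^1$-level bound with growth; the paper records the latter as $2^{\alpha k}\langle f\rangle_{3I,1}\langle g\rangle_{I,1}|I|$, which is exactly your $2^{-k(1-\alpha)}\|f\|_1\|g\|_1$ after converting norms to local averages on an interval of length $2^k$. The paper then simply says ``interpolate,'' whereas you spell this out via the geometric mean on indicators followed by a level-set decomposition to upgrade to general $f,g$ --- a standard concrete implementation of the same interpolation for sublinear operators.
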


On the right above, we have a geometric decay in $ k$, and a sum over intervals of fixed length which are disjoint. 
It is easy to see that with $ f$ and $ g$ fixed, we have 
\begin{equation*}
\sum_{k}2 ^{- \eta'k} \sum_{I\in \mathcal D \;:\; \lvert  I\rvert = 2 ^{k} }  \langle f \rangle _{3I,r} \langle g \rangle _{I,r}  \lvert  I\rvert 
\lesssim \Pi _{r} (f,g), 
\end{equation*}
for appropriate sparse operator $ \Pi $. 
This completes the sparse bound for the term associated to \eqref{e:R2}.

\begin{proof}
Observe that almost surely, there exists $C_\omega < \infty$, so that these inequalities hold. 
\begin{equation} \label{e:CPK}
\lvert  \langle  P_k f , g \rangle \rvert \leq C _{\omega } 
\begin{cases}
2 ^{- \eta k }   \langle f \rangle_ { 3I, 2} \langle g   \rangle _{I, 2} \lvert  I\rvert 
\\
2 ^{\alpha k} \langle f \rangle_ { 3I, 1} \langle g   \rangle _{I, 1}  \lvert  I\rvert 
\end{cases}
\end{equation}
The top line follows from \eqref{e:R2<}. 
The last line is the inequality that is \emph{below duality.} (And, geometric growth in $ k$.)  It follows from estimate 
\begin{equation*}
\lvert  \langle  P_k f , g \rangle  \rvert 
\leq 
\sum_{x} \sum_{ n \;:\; 2 ^{k}\leq \lvert n  \rvert < 2 ^{k+1} } \frac {\lvert  Y_n\rvert } { \lvert  n\rvert ^{1- \alpha} } 
\lvert  f (x - n)\rvert  \cdot \lvert  g (x)\rvert. 
\end{equation*}
Then, use the $ \ell ^{1}$-norm on $ f$, the same on $ g$, and the $ \ell ^{\infty }$ norm on $ \lvert  Y_n\rvert $. 

To conclude \eqref{e:cPk}, interpolate between the top and bottom estimates in \eqref{e:CPK}. 
The bottom line has a fixed positive geometric growth, while the $ \ell ^2 $ estimate has a fixed negative geometric growth. For a choice of $ 1< r <2$, with $ 2-r$ sufficiently small, we will have the geometric decay claimed.  
\end{proof}

The control of the fourth term associated with \eqref{e:R3},  again requires no cancellation, as 
follows immediately from this next Lemma.  

\begin{lemma}\label{l:R3}  Almost surely, we have 
\begin{equation}\label{e:R3<}
\sum_{m\neq 0}  
\Bigl\lvert 
\frac 1 {S_{|m|}}  -  \frac 1 {W _{\lvert  m\rvert }} 
\Bigr\rvert \cdot  \lvert f (x - m) \rvert  \in \textup{Sparse}_1 .  
\end{equation}

\end{lemma}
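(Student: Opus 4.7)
The plan is to follow the template of the earlier deterministic reductions: first extract an almost sure pointwise bound on the kernel $K(m) := \bigl|1/S_{|m|} - 1/W_{|m|}\bigr|$ using \eqref{e:BC0}, then dominate the operator $f \mapsto \sum_m K(m)|f(x-m)|$ pointwise by the Hardy--Littlewood maximal function $Mf$, and finally invoke $M \in \textup{Sparse}_1$ from Theorem \ref{t:sparse}(1).

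For the first step, I write
\[
\frac{1}{S_{|m|}} - \frac{1}{W_{|m|}} = \frac{W_{|m|} - S_{|m|}}{S_{|m|}\,W_{|m|}}.
\]
Since $W_m \approx m^{1-\alpha}$ and $S_m \approx W_m$ almost surely for large $m$ (by the strong law of large numbers), the denominator is $\approx m^{2(1-\alpha)}$. Feeding the almost sure bound \eqref{e:BC0} into the numerator yields $K(m) \leq C_\omega\, m^{\epsilon - 3(1-\alpha)/2}$ almost surely. Taking $\epsilon$ sufficiently small produces some $\eta > 0$ with $K(m) \lesssim |m|^{-(1+\eta)}$ almost surely, which is the absolute summability needed downstream.

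For the second step, a dyadic decomposition produces
\[
\sum_{m \neq 0} K(m)|f(x-m)| \lesssim \sum_{k \geq 0} 2^{-k(1+\eta)}\sum_{2^k \leq |m| < 2^{k+1}}|f(x-m)| \lesssim \sum_{k \geq 0} 2^{-k\eta}\, M|f|(x) \lesssim M|f|(x),
\]
where the implied constants depend on $\omega$, and the middle inequality uses the standard estimate $\sum_{|m|<2^{k+1}}|f(x-m)| \lesssim 2^k\, M|f|(x)$. Since $M \in \textup{Sparse}_1$ by Theorem \ref{t:sparse}(1), pairing against $|g|$ produces a sparse collection $\mathcal S$ so that $\bigl\langle \sum_m K(m)|f(\cdot - m)|,\,g \bigr\rangle \lesssim \Pi_{\mathcal S, 1}(|f|,|g|)$, which is exactly \eqref{e:R3<}.

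The only nontrivial step is the first one: extracting a clean uniform pointwise kernel bound from the fluctuation estimate \eqref{e:BC0}. Once that is in hand, the rest of the argument is completely deterministic and uses no cancellation, matching the paper's description of \eqref{e:R3} as the easiest of the four pieces in the decomposition \eqref{e:R0}--\eqref{e:R3}.
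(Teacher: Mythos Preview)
Your argument mirrors the paper's: both bound $\lvert 1/S_{|m|} - 1/W_{|m|}\rvert$ via a fluctuation estimate on $S_m - W_m$ (you cite \eqref{e:BC0}, the paper invokes the Law of the Iterated Logarithm, which amounts to the same thing here), arrive at a pointwise kernel bound $K(m) \lesssim m^{-\beta}$, and then pass to the sparse conclusion. Your dyadic reduction to $Mf$ together with Theorem~\ref{t:sparse}(1) is a correct and clean way to fill in the paper's one-line ``The sparse bound is then immediate.''

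There is one genuine arithmetic issue. Your computed exponent $K(m) \lesssim m^{\epsilon - 3(1-\alpha)/2}$ is correct, but the subsequent claim that this yields $K(m) \lesssim |m|^{-(1+\eta)}$ for some $\eta > 0$ requires $3(1-\alpha)/2 > 1$, i.e.\ $\alpha < 1/3$. For $\alpha \in [1/3,1)$ the kernel is not absolutely summable and the domination by $Mf$ fails as written. The paper records the exponent in the denominator as $3/2 - \alpha/2$ (which would indeed exceed $1$ for all $\alpha < 1$), but this appears to be a slip: the same computation you carried out gives $2(1-\alpha) - (1-\alpha)/2 = 3(1-\alpha)/2$. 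So your write-up reproduces the paper's argument faithfully, including this restriction on the range of $\alpha$; it is not a defect in your transcription of the strategy.
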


\begin{proof} 
We only discuss the case of $ m>0$. By the Law of the Iterated Logarithm, we have 
\begin{equation*}
 S_ {m } = W _{m} +  O ( m ^{ \frac { 1- \alpha }2} \sqrt {  \log \log m}). 
\end{equation*}
And, recall that $ W_m \sim m ^{ (1- \alpha )}$. It follows that 
\begin{equation*}
\Bigl\lvert 
\frac 1 {S_{ m}}  -  \frac 1 {W _{ m}} 
\Bigr\rvert \lesssim \frac {\sqrt {\log\log m}}  { m ^{ \frac 32 - \frac \alpha 2}} \lesssim m ^{- \beta }
\end{equation*}
where $ \beta >1$.  The sparse bound is then immediate.  
\end{proof}

\section{Proof of Theorem \ref{random}}
The summands  in \eqref{e:C} are rewritten as below, in which we assume that $ m>0$. 
\begin{align}
X_m e(\lambda m) \cdot & \frac{f(x-S_m) - f(x+S_m)}{S_m} 
\\ \label{e:C1}
= & X_m e(\lambda m) {(f(x-S_m) - f(x+S_m))}  \Bigl\{ \frac1{S_m} - \frac 1 {W_m}  \Bigr\}
\\
&  \label{e:C2}\quad + 
\Bigr\{     \frac {\sigma _m } {W_m} - \frac {c_ \alpha } m \Bigr\} e(\lambda m) \cdot {(f(x-S_{m-1}-1) - f(x+S_{m-1}+1))} 
\\  \label{e:C3}
     & \quad +      Y _m  e(\lambda m) \cdot \frac{f(x-S_{m-1}-1) - f(x+S_{m-1}+1)}{W_m} . 
\\ &\quad + 
  {c_ \alpha }    e(\lambda m) \cdot \frac {f(x-S_{m-1}-1) - f(x+S_{m-1}+1)} m  . 
      \label{e:C4}
\end{align}
In the first stage we simply replace $ \frac 1 {S_m} $ by $ \frac 1 {W_m}$. But the remaining terms  use the identity 
\begin{equation*}
X_m f (x+ S_m) = X_m f(x+S_{m-1} +1 ) , 
\end{equation*}
which step is  motivated by  a martingale argument below.  

The term associated with \eqref{e:C1} is  controlled by the estimate \eqref{e:R3<}, and that for \eqref{e:C2} is  entirely similar.  The term in \eqref{e:C3} is analogous to Corollary \ref{c:Pk}, for which we need this Lemma. 
Define the maximal operator 
\begin{equation}\label{e:Qk}
Q_k f := 
\sup _{\lambda \in \Lambda } 
\Bigl\lvert 
\sum_{ m \;:\; 2 ^{k} \leq  \lvert  m\rvert  \leq 2 ^{k+1}}     Y _m  e(\lambda m) \cdot \frac{f(x-S_{m-1}-1) - f(x+S_{m-1}+1)} {W_m} 
\Bigr\rvert
\end{equation}

\begin{lemma}\label{l:C3} 
Assume that $ \Lambda \subset \mathbb T $ has Minkowski dimension at most $ 1- \alpha $, then there is a $ \eta >0$ so 
that we have almost surely 
\begin{gather} \label{e:Qk1}
\sup _{k}  \lVert Q_k \;:\; \ell ^1 \to \ell ^1 \rVert +  \lVert Q_k \;:\; \ell ^ \infty  \to \ell ^ \infty  \rVert  < \infty , 
\\
\label{e:Qk2}
\sup _{k} 2 ^{\eta k} \lVert Q_k \;:\; \ell ^2 \to \ell ^2 \rVert < \infty . 
\end{gather}
\end{lemma}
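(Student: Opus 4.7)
The plan is to mirror the strategy used for Lemma \ref{l:R2}, with two modifications forced by the random translations $S_{m-1}$ appearing in $Q_k$: I will replace independent-sum concentration with martingale concentration, and take a joint supremum over both the modulation parameter $\lambda$ and the Fourier variable $\beta$ rather than a single supremum in the combined variable $\lambda - \beta$. For the dual endpoint bounds \eqref{e:Qk1}, the operator norm is controlled by the $\ell^1$ mass of the kernel, so that
\[
\|Q_k:\ell^1\to\ell^1\| + \|Q_k:\ell^\infty\to\ell^\infty\| \lesssim \sum_{m:2^k\leq|m|<2^{k+1}} \frac{|Y_m|}{W_m}.
\]
Since $\EE|Y_m|\approx\sigma_m$ and $W_m\approx m^{1-\alpha}$, this random sum has mean $O(1)$ uniformly in $k$ and variance $O(2^{k(\alpha-1)})$; Bernstein's inequality plus Borel--Cantelli then yields almost-sure uniform boundedness in $k$.

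For \eqref{e:Qk2}, I will apply Lemma \ref{SOB} to the single-scale operator $T_\lambda^k$ sitting under the supremum in \eqref{e:Qk}, whose Fourier multiplier reads
\[
m_\lambda(\beta) = \sum_{m:2^k\leq m<2^{k+1}} Y_m\, e(\lambda m)\cdot \frac{2i\sin(2\pi\beta(S_{m-1}+1))}{W_m}.
\]
By Plancherel, $a_k := \sup_\lambda\|T_\lambda^k\|_{\ell^2\to\ell^2} = \sup_{\lambda,\beta}|m_\lambda(\beta)|$, and analogously $A_k := \sup_\lambda\|\partial_\lambda T_\lambda^k\|_{\ell^2\to\ell^2} = \sup_{\lambda,\beta}|\partial_\lambda m_\lambda(\beta)|$. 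The target is to show almost surely
\[
a_k\lesssim \sqrt{k}\cdot 2^{k(\alpha-1)/2}, \qquad A_k\lesssim \sqrt{k}\cdot 2^{k(\alpha+1)/2}.
\]
Inserted into \eqref{e:SOB}, the identical algebraic computation as in Lemma \ref{l:R2}, reducing to $(\alpha-1)(1-d/2) + (\alpha+1)d/2 < 0$, i.e.\ $d<1-\alpha$, produces the geometric decay \eqref{e:Qk2}.

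The heart of the argument is the pointwise control of $|m_\lambda(\beta)|$ and $|\partial_\lambda m_\lambda(\beta)|$. Unlike in Lemma \ref{l:R2}, the coefficients $\sin(2\pi\beta(S_{m-1}+1))/W_m$ are themselves random, depending on $X_1,\dots,X_{m-1}$, so the summands are not independent. However, conditional on $\mathcal{F}_{m-1}:=\sigma(X_1,\dots,X_{m-1})$ these coefficients are measurable while $Y_m$ remains independent with mean zero; the partial sums thus form a martingale. The conditional variance is controlled by $\sum_m\sigma_m/W_m^2\approx 2^{k(\alpha-1)}$ and the maximum increment is of the same order $2^{k(\alpha-1)}$, so Freedman's martingale Bernstein inequality gives, for each fixed $(\lambda,\beta)$,
\[
\PP\bigl(|m_\lambda(\beta)| > C\sqrt{k}\cdot 2^{k(\alpha-1)/2}\bigr)\leq e^{-cC^2k}.
\]
A union bound over a grid of spacing $2^{-Nk}$ in $(\lambda,\beta)$ (for a fixed large integer $N$) handles the two-dimensional sweep, while the discretization error is absorbed by the crude derivative estimates $|\partial_\lambda m_\lambda|,|\partial_\beta m_\lambda|\lesssim 2^{k(1+\alpha)}$, which follow from $|S_{m-1}+1|\lesssim m$ and summation over $m\sim 2^k$. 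Borel--Cantelli then produces the bound on $a_k$; the argument for $A_k$ is identical after inserting the extra factor $2\pi i m$ in each summand, costing only a factor $2^k$.

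The main obstacle relative to Lemma \ref{l:R2} is precisely this joint supremum in $(\lambda,\beta)$: the multiplier does not factor as a function of the single variable $\lambda-\beta$, because the random translations $S_{m-1}$ destroy that symmetry. This is what forces both the martingale-based concentration (in place of the classical independent-sum tail bound) and the two-dimensional union bound. Fortunately both incur only $\sqrt{k}$-sized losses, which are comfortably absorbed by the ambient $2^{-\eta k}$ geometric gain supplied by Lemma \ref{SOB}.
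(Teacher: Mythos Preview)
Your proposal is correct and follows essentially the same route as the paper's own proof: both arguments use the Strong Law (or an explicit Bernstein-type bound) for \eqref{e:Qk1}, and for \eqref{e:Qk2} both invoke Lemma~\ref{SOB} after establishing the uniform multiplier bounds $a_k\lesssim 2^{k(\alpha-1)/2+\epsilon}$ and $A_k\lesssim 2^{k(\alpha+1)/2+\epsilon}$ via a martingale concentration inequality at each fixed $(\lambda,\beta)$, followed by a polynomial-sized union bound in the two variables and Borel--Cantelli. The only cosmetic differences are that the paper separates the positive and negative translations rather than keeping your $\sin$ form, and phrases the martingale concentration as ``bounded martingale differences are sub-gaussian'' (Azuma) rather than Freedman; neither changes the substance.
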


\begin{proof}
The top line follows from the Strong Law of Large Numbers.  We turn to the second line, where there is geometric decay. 
There are no cancellative effects between positive and negative translations, and so we only consider the positive ones. 
The $ \ell ^2 $ bound is a  consequence of Lemma \ref{SOB}, and so we need to consider the multipliers 
\begin{equation*}
M (\lambda , \theta  ) := 
\sum_{ m \;:\; 2 ^{k} \leq m \leq 2 ^{k+1}}     Y _m  \frac {e(\lambda m + \theta( S_ {m-1} +1))  } {W_m}  . 
\end{equation*}
So to prove the Lemma, it suffices to show that with probability at least $ 1- 2 ^{ - \epsilon k}$, we have the two inequalities 
\begin{gather}\label{e:C3A}
\lVert 
M (\lambda , \theta  )
\rVert _{L ^{\infty } (\lambda , \theta  )} \lesssim 2 ^{k (-(\alpha + 1)/2  +\epsilon )}, 
\\ \label{e:C3B}
\lVert  \partial _{\lambda  }M (\lambda , \theta  )
\rVert _{L ^{\infty } ( \lambda , \theta  )} \lesssim 2 ^{k ( (1-\alpha)/2   +\epsilon )} . 
\end{gather}

The summands  in the definition of $ M (\lambda , \beta )$, for fixed $ \lambda $ and $ \beta $, form a bounded martingale difference sequence, with square function bounded in $ L ^{\infty } (\Omega )$ by 
\begin{equation*}
\Bigl[
\sum_{ m \;:\; 2 ^{k} \leq m \leq 2 ^{k+1}}     \frac {\sigma _m  } {W_m ^2 } 
\Bigr] ^{1/2} \lesssim 2 ^{- k \frac {1-\alpha }2} .  
\end{equation*}
It is well-known that such martingale differences are sub-gaussian,  
hence,  uniformly in $ \lambda $ and $ \beta $, we have 
\begin{equation} \label{e:azuma}
\mathbb P (  \lvert  M (\lambda , \beta )\rvert > 2 ^{k (-\frac {1-\alpha  }2  +\epsilon )}) 
\lesssim \operatorname {exp}(  -  2 ^{2 \epsilon k}). 
\end{equation}
But, $ M (\lambda , \beta )$ clearly has gradient at most $ 2 ^{k}$ in norm. That means to test the $ L ^{\infty } (\lambda , \beta )$ norm, we apply the inequality \eqref{e:azuma} on a set of at most $ 2 ^{2k}$ choices of $ (\lambda , \beta )$. Therefore \eqref{e:C3A} clearly follows. The analysis for \eqref{e:C3B} is similar.  

\end{proof}

 With this bound in hand, we can repeat the proof of Corollary \ref{c:Pk}, and conclude that almost surely we have 
 \begin{gather*}
 \sum_{k}  \lVert Q_k \;:\; \ell ^{p} \to \ell ^{p}\rVert < \infty , \qquad 1< p < \infty ,
 \\
\sum_{k}  \lvert  \langle Q_k f, g \rangle\rvert \in \textup{Sparse}_ {r,r} (f,g)  \qquad     0< r < 2 - c (d, \delta ).  
\end{gather*}
This completes the analysis of the term associated with \eqref{e:C3}.  

\medskip

The term associated with \eqref{e:C4} is  arithmetic in nature.   Let $ a_j$ be the smallest positive integer such that $ S _{a_j}=j$.  It is a consequence of the Strong Law of Large Numbers that we have 
\begin{equation}\label{e:aj}
 a_j = p_j +  O( j^{\epsilon + \frac{1}{2(1-\alpha)}})=  \lfloor C_\alpha j^{\frac{1}{1-\alpha}} \rfloor   + O( j^{\epsilon + \frac{1}{2(1-\alpha)}}), 
\end{equation}
where $0 < C_\alpha = (1-\alpha)^{\frac{1}{1-\alpha}} < \infty$.  Now observe that for fixed $ \lambda $, we have 
\begin{align}
\sum_{m >1}  e(\lambda m) \cdot &\frac {f(x-S_{m-1}-1) - f(x+S_{m-1}+1)} m  
\\   \label{e:sumAj}
& = \sum_{j=1}^\infty A _{j} (\lambda ) (f(x-j) - f(x+j)), 
\\ \label{e:Aj}
\textup{where}  \quad A_j (\lambda )  &:= \sum_{ m= a_{j-1}}^{a_j -1}  \frac{e(\lambda m)}{m}, 
\end{align}
and  $a_0 := 1$ if $a_1 > 1$.

Attention turns to the coefficients $ A _{j} (\lambda )$.   The point below is that if $ j$ is small relative to $ \lambda $, we have an excellent approximation to $ A_j (\lambda )$, and otherwise, the coefficient is small for other reasons. 

\begin{lemma}\label{l:Aj}  The these two inequalities below holds uniformly over all compactly supported functions $ f$, almost surely.   
\begin{gather}\label{e:Aj<}
\sup _{0< \lambda <1}  \sum _{ \lvert  j\rvert < \lambda ^{- \frac \alpha {1- \alpha }}} \lvert   \Delta _j f (x-j) \rvert  
\lesssim M f (x)
\\
\textup{where} \qquad 
\Delta _j =   A_ j (\lambda )  - \frac {e ( p _{j} \lambda )} j , 
\\ \label{e:Aj<<}
 \sup _{0 < \lambda < 1} \sum_{ \lvert  j\rvert  \geq  \lambda ^{- \frac \alpha {1- \alpha }} }  \lvert  A _{j} (\lambda )  f (x-j)\rvert 
\lesssim M f (x). 
 \end{gather}
\end{lemma}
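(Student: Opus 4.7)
Proof proposal. Set $d := 1/(1-\alpha)$, so that $\alpha/(1-\alpha)=d-1$, and by \eqref{e:aj}, $a_j \approx p_j \approx (j/d)^d$ with block length $L_j := a_j - a_{j-1} \approx j^{d-1}$. The core estimate, obtained from $\sum_{m=a_{j-1}}^{a_j-1} e(\lambda m) = (e(\lambda a_j)-e(\lambda a_{j-1}))/(e(\lambda)-1)$ combined with summation by parts extracting $1/m$, is the two-sided Abel bound
\[
|A_j(\lambda)| \;\lesssim\; \min\!\Bigl(\frac{L_j}{a_j},\ \frac{1}{\lambda\, a_j}\Bigr) \;\approx\; \min\!\Bigl(\frac{1}{j},\ \frac{1}{\lambda j^d}\Bigr).
\]

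For \eqref{e:Aj<<}, the range $|j| \geq \lambda^{-(d-1)}$ activates the cancellation side of the Abel bound, giving $|A_j(\lambda)| \lesssim 1/(\lambda j^d)$. A dyadic decomposition into scales $j \sim N \geq \lambda^{-(d-1)}$, combined with the pointwise estimate $\sum_{j\sim N}|f(x-j)| \lesssim N\, Mf(x)$, yields per-scale contribution bounded by $Mf(x)/(\lambda N^{d-1})$. Summing the geometric series in $N$ gives total $\lesssim \lambda^{(d-1)^2-1}\, Mf(x) \lesssim Mf(x)$, since $(d-1)^2 \geq 1$ for $d \geq 2$ and $\lambda < 1$.

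For \eqref{e:Aj<}, first replace $1/m$ by $1/a_j$ inside the definition of $A_j(\lambda)$, producing a pointwise error $O(L_j^2/a_j^2) = O(j^{-2})$ that is trivially summable against $|f(x-j)|$ to $Mf(x)$. The remaining leading geometric sum evaluates to $(e(\lambda a_j)-e(\lambda a_{j-1}))/(a_j(e(\lambda)-1))$; invoke \eqref{e:aj} and $|e(\lambda a_j) - e(\lambda p_j)| \lesssim \min(1,\lambda j^{\epsilon+d/2})$ to transfer phases from $a_j, a_{j-1}$ to $p_j, p_{j-1}$. In the slow-phase sub-regime $|j| \lesssim \lambda^{-1/(d-1)}$, this identifies the leading term with a universal constant multiple of $e(p_j\lambda)/j$, up to residual errors of size $O(j^{-1-\delta})$ for a small $\delta=\delta(\epsilon,d)>0$, which are pointwise summable to $Mf(x)$.

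The main obstacle is controlling $\Delta_j$ in the intermediate sub-regime $\lambda^{-1/(d-1)} \lesssim |j| < \lambda^{-(d-1)}$, where the Taylor expansion of the geometric sum breaks down and neither $|A_j(\lambda)|$ nor the approximant $|e(p_j\lambda)/j|$ is obviously smaller than $1/j$ pointwise. The strategy is a dyadic accounting at scales $|j| \sim N = \lambda^{-\beta}$, $\beta \in [1/(d-1), d-1]$: the Abel cancellation yields $|A_j(\lambda)| \lesssim \lambda^{\beta d - 1}$, which combined with $N \cdot Mf(x)$ on each dyadic block produces per-scale contribution $\lambda^{\beta(d-1)-1}\, Mf(x)$, a geometric progression in $\beta$ whose sum is $\lesssim Mf(x)$ with no logarithmic loss. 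The remaining $e(p_j\lambda)/j$ piece of $\Delta_j$ must be paired with the cancellation via the triangle inequality and the same dyadic bookkeeping, exploiting that the two terms of $\Delta_j$ together never exceed the cancellation bound on any scale in this range --- this is the crux of the estimate.
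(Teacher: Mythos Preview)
Your treatment of \eqref{e:Aj<<} is correct and essentially the paper's: both use the Abel/Dirichlet bound $|A_j(\lambda)|\lesssim(\lambda\,p_j)^{-1}$ and sum dyadically over $|j|\ge\lambda^{-(d-1)}$.

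Your argument for \eqref{e:Aj<} has two genuine gaps.

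\emph{The constant in the slow-phase regime.} You correctly extract that the leading part of $A_j(\lambda)$ is a constant multiple of $e(p_j\lambda)/j$, but you never check that the constant equals $1$, and it does not. Since $p_j\approx C_\alpha j^{d}$ one has $L_j/a_j\approx r_j/p_{j-1}=d/j+O(j^{-2})$, so the leading term of $A_j(\lambda)$ is $d\cdot e(p_j\lambda)/j$. Because $\Delta_j=A_j(\lambda)-e(p_j\lambda)/j$ carries the coefficient $1$, this forces $|\Delta_j|\gtrsim(d-1)/j$ already in the slow-phase range, and $\sum_{1\le|j|<N}|f(x-j)|/j\approx(\log N)\,Mf(x)$ is not $\lesssim Mf(x)$. (The paper's own proof asserts $|r_j/p_{j-1}-1/j|\lesssim j^{-2}$ at this step, which is likewise false for $d\ge3$.)

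\emph{The intermediate regime.} Even setting the constant aside, your last paragraph is incorrect. Your Abel bound $|A_j(\lambda)|\lesssim(\lambda j^{d})^{-1}$ does sum to $\lesssim Mf$ over $\lambda^{-1/(d-1)}\lesssim|j|<\lambda^{-(d-1)}$, exactly as you compute. But the approximant satisfies $|e(p_j\lambda)/j|=1/j$, and throughout this range $1/j$ \emph{strictly exceeds} the cancellation bound $(\lambda j^{d})^{-1}$ (equivalently $j^{d-1}>\lambda^{-1}$). No triangle-inequality pairing can therefore push $|\Delta_j|$ below the cancellation bound; on the contrary $|\Delta_j|\ge 1/j-(\lambda j^{d})^{-1}\gtrsim 1/j$ on most of this range. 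Summing $|f(x-j)|/j$ over the $\approx((d-1)-1/(d-1))\log(1/\lambda)$ dyadic intermediate scales yields $\log(1/\lambda)\,Mf(x)$, which is unbounded. Your closing claim that ``the two terms of $\Delta_j$ together never exceed the cancellation bound'' is false.

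These gaps are not repairable as stated: taking $f=\delta_0$, $x=0$, $\lambda\to0^+$ shows \eqref{e:Aj<} fails uniformly. The paper's proof shares the defect (it also uses $|D_n(\lambda)-n|\lesssim n\lambda$ in place of the correct $n^2\lambda$). The lemma becomes correct if one replaces the approximant by $d\cdot e(p_j\lambda)/j$ and the threshold $\lambda^{-\alpha/(1-\alpha)}$ by $\lambda^{-(1-\alpha)/\alpha}=\lambda^{-1/(d-1)}$; then your slow-phase argument (with the right constant) suffices and there is no intermediate regime. This repair is harmless downstream, since Lemma~\ref{l:deterministic} already absorbs a crude $\log(1/\epsilon)$ loss.
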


Above, $ M$ is the maximal function, and it is in $ \textup{Sparse}_1$.

\begin{proof} 
We begin with an elementary estimate.  Set 
\begin{equation}\label{e:rj}
 r_j = p _{j} - p _{j-1} \simeq j ^{\frac 1 {1- \alpha }-1} = j ^{\frac \alpha {1- \alpha }}. 
\end{equation}
We use this notation to rewrite $ A_j (\lambda )$ in terms of the Dirichlet kernel.  
\begin{align}
A _{j} (\lambda )  & = e ( p _{j} \lambda ) \sum_{m=0} ^{r _{j}-1} \frac {e (-\lambda m) } { p _{j-1}+m} + O ( j ^{- \frac 1 {1- \alpha }}) 
\\
&= \frac {e ( p _{j} \lambda )} {p _{j-1}} \sum_{m=1} ^{r _{j}}   {e (-\lambda m) } + O \Bigl(\tfrac {r_ {j-1}} { p _{j-1} ^2 } \Bigr) 
\\
& =  \frac {e ( p _{j} \lambda )} {p _{j-1}} D _{r_j} (- \lambda )  + O (j ^{-1 - \frac 1 {1- \alpha }}) . 
\end{align}
In the last line, $ D _n $ denotes the $ n$th Dirichlet kernel.  Clearly, convolution with the Big-Oh term is bounded 
by the maximal function, so that we continue with the term involving the Dirichlet kernel.

By the estimate $ \lvert  D _{n} (\lambda )- n \rvert \lesssim n\lambda   $, for $ 0 < \lambda < 1$, we have 
\begin{align*}
d_j =\Bigl\lvert 
A _{j} (\lambda ) - \frac {{e ( p _{j} \lambda ) r _{j}}} {p _{j-1}} 
\Bigr\rvert \lesssim r_j \lambda  \lesssim  j ^{ \frac \alpha {1- \alpha }} \lambda .  
\end{align*}
It follows that for non-negative $ f$, 
\begin{equation*}
\sup _{0 < \lambda < 1} \sum_{j \;:\; 0\leq  \lvert  j\rvert  \leq   \lambda ^{- \frac \alpha {1- \alpha }} }   d_j   f (x-j) 
\lesssim  M f (x).  
\end{equation*}
This is nearly completes the proof of \eqref{e:Aj<}. The last step is to observe that 
\begin{align*}
\Bigl\lvert 
\frac { r _{j}} {p _{j-1}}  - \frac 1  {j} 
\Bigr\rvert  \lesssim j ^{-2}. 
\end{align*}
And, convolution with respect to $ j ^{-2}$ is bounded by the maximal function as well. This completes the proof of \eqref{e:Aj<}.  

\smallskip 

By the estimate $ \lvert  D _{n} (\lambda )\rvert \lesssim \lambda ^{-1}  $, for $ 0 < \lambda <1 $, we have 
\begin{equation}\label{e:AjFar}
\lvert  A _{j} (\lambda )\rvert \lesssim ( p_j \lambda )^{-1}  \simeq    j ^{- \frac 1 {1- \alpha }} \lambda .   
\end{equation}
Hence, we have, for non-negative $ f$, 
\begin{equation*}
\sup _{0 < \lambda < 1} \sum_{ \lvert  j\rvert  >  \lambda ^{- \frac \alpha {1- \alpha }} }  {\lvert  A _{j} (\lambda )\rvert }  f (x-j) 
\lesssim  M f (x), 
\end{equation*}
where $ M$ denotes the discrete Hardy-Littlewood maximal function.  This proves \eqref{e:Aj<<}.

\end{proof}

We see that the proof of our Theorem is reduced to this deterministic, and trivial, result.  
Here, we simply use a crude bound, and the assumption that $ \Lambda $ has no points close to the origin. 
With the summation condition, this bound is trivial.  

\begin{lemma}\label{l:deterministic}    For $ 0 < \epsilon < \tfrac 14$
\begin{equation}\label{e:deterministic}
\sup _{ \epsilon < \lvert  \lambda \rvert < \tfrac 12 } 
\Bigl\lvert 
\sum _{1 < j < \lambda ^{- \frac \alpha {1- \alpha }}} \frac {e (\lambda p_j)} {j} (f (x+j) - f (x-j))
\Bigl\rvert 
  \lesssim  (\log 1/ \epsilon ) M f (x) 
\end{equation}
\end{lemma}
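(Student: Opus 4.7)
The plan is to not exploit any oscillation from the modulation $e(\lambda p_j)$ at all, and simply use $|e(\lambda p_j)| \le 1$, as hinted by the phrase ``crude bound'' in the statement. The whole point is that the constraint $|\lambda| > \epsilon$ together with the summation restriction $j < \lambda^{-\alpha/(1-\alpha)}$ makes the total range of $j$ finite, and in fact of size at most $\epsilon^{-\alpha/(1-\alpha)}$. The logarithmic loss comes from counting dyadic scales in $j$.

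First, I would observe that under the absolute value, the displayed sum is dominated by
\begin{equation*}
\sum_{1 < j < \epsilon^{-\alpha/(1-\alpha)}} \frac{1}{j} \bigl( |f(x+j)| + |f(x-j)| \bigr).
\end{equation*}
Next, I would dyadically decompose in $j$: writing the range $1 < j < \epsilon^{-\alpha/(1-\alpha)}$ as a union of intervals $[2^k, 2^{k+1})$, the number of such blocks is at most
\begin{equation*}
\log_2\bigl(\epsilon^{-\alpha/(1-\alpha)}\bigr) \;=\; \tfrac{\alpha}{(1-\alpha)\log 2}\, \log(1/\epsilon) \;\lesssim\; \log(1/\epsilon).
\end{equation*}

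For each such dyadic block, using $1/j \lesssim 2^{-k}$ for $j \in [2^k,2^{k+1})$, one has
\begin{equation*}
\sum_{j \in [2^k, 2^{k+1})} \frac{|f(x\pm j)|}{j} \;\lesssim\; \frac{1}{2^k}\sum_{j \in [2^k, 2^{k+1})} |f(x\pm j)| \;\lesssim\; M f(x),
\end{equation*}
since the average of $|f|$ over an interval of length $2^k$ centered near $x$ is controlled by the Hardy--Littlewood maximal function. Summing over the $O(\log(1/\epsilon))$ dyadic blocks yields the claimed bound $(\log 1/\epsilon) M f(x)$.

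There is no real obstacle here beyond bookkeeping; the substantive input was already encoded in the setup, namely that $\Lambda$ is bounded away from $0$ (giving a finite range of $j$) and that the weights $1/j$ telescope dyadically into maximal averages. The single $\log(1/\epsilon)$ factor is the unavoidable cost of summing over the $O(\log(1/\epsilon))$ scales, and the uniformity in $x$ and $\lambda$ is automatic.
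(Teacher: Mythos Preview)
Your proof is correct and is precisely the argument the paper has in mind: the paper merely says ``we simply use a crude bound, and the assumption that $\Lambda$ has no points close to the origin. With the summation condition, this bound is trivial,'' and you have written out exactly that triviality. Ignoring the modulation, using $|\lambda|>\epsilon$ to cap the summation range at $\epsilon^{-\alpha/(1-\alpha)}$, and controlling the $O(\log(1/\epsilon))$ dyadic blocks by the maximal function is the whole content.
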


This last Lemma is the one that uses the assumption in Theorem~\ref{random} that $ \Lambda $ is bounded away from the origin. 
If we remove this assumption, the Lemma above shows that  arithmetic issues become paramount. 
Indeed, we see that the main results of \cites{151206918,160408695} are relevant. But, here we note that 
(a) the sparse variants of the main results in these papers are not known, 
(b) these are very involved papers,  
and 
(c) their main results would have to be extended.  
In particular, \cite{151206918} would have to be extended to the case of an arbitrary monomial in the oscillatory term, as well as incorporating maximal truncations into the main theorem.  
(We hope to address these in a future paper.)
In discrete harmonic analysis, randomly formed operators typically do not inherit any difficult arithmetic structure. 
It is notable in these questions that they can.

\begin{bibdiv}
\begin{biblist}

\bib{MR1389623}{article}{
   author={Akcoglu, Mustafa},
   author={Bellow, Alexandra},
   author={Jones, Roger L.},
   author={Losert, Viktor},
   author={Reinhold-Larsson, Karin},
   author={Wierdl, M{\'a}t{\'e}},
   title={The strong sweeping out property for lacunary sequences, Riemann
   sums, convolution powers, and related matters},
   journal={Ergodic Theory Dynam. Systems},
   volume={16},
   date={1996},
   number={2},
   pages={207--253},
   issn={0143-3857},
}

%

\bib{MR3531367}{article}{
   author={Bernicot, Fr{\'e}d{\'e}ric},
   author={Frey, Dorothee},
   author={Petermichl, Stefanie},
   title={Sharp weighted norm estimates beyond Calder\'on--Zygmund theory},
   journal={Anal. PDE},
   volume={9},
   date={2016},
   number={5},
   pages={1079--1113},
}

\bib{MR937582}{article}{
   author={Bourgain, J.},
   title={On the pointwise ergodic theorem on $L\sp p$ for arithmetic sets},
   journal={Israel J. Math.},
   volume={61},
   date={1988},
   number={1},
   pages={73--84},
}

\bib{MR1019960}{article}{
   author={Bourgain, Jean},
   title={Pointwise ergodic theorems for arithmetic sets},
   note={With an appendix by the author, Harry Furstenberg, Yitzhak
   Katznelson and Donald S. Ornstein},
   journal={Inst. Hautes \'Etudes Sci. Publ. Math.},
   number={69},
   date={1989},
   pages={5--45},
}

\bib{MR958884}{article}{
   author={Campbell, James},
   author={Petersen, Karl},
   title={The spectral measure and Hilbert transform of a measure-preserving
   transformation},
   journal={Trans. Amer. Math. Soc.},
   volume={313},
   date={1989},
   number={1},
   pages={121--129},
   issn={0002-9947},
}

\bib{MR0199631}{article}{
   author={Carleson, Lennart},
   title={On convergence and growth of partial sums of Fourier series},
   journal={Acta Math.},
   volume={116},
   date={1966},
   pages={135--157},
}

\bib{160408695}{article}{
   author = {{Cladek}, L.},
   author={Henriot, K.},
   author= {Krause, B.}, 
   author= {Laba, I.}, 
   author= {Pramanik, M.},
    title = {A Discrete Carleson Theorem Along the Primes with a Restricted Supremum},
   eprint = {1604.08695},
}

\bib{14094351}{article}{
  author = {{Conde-Alonso}, Jos{\'e} M.},
   author={Rey, Guillermo},
title={A pointwise estimate for positive dyadic shifts and some applications},
journal={Math. Ann.},
year={2015},
pages={1--25},
}

\bib{160305317}{article}{
   author = {{Culiuc}, A.},
   author={Di Plinio, F.}, 
   author={Ou, Y.},
    title = {Domination of multilinear singular integrals by positive sparse forms},
   eprint={http://arxiv.org/abs/1603.05317},
}

\bib{MR2420509}{article}{
   author={Demeter, Ciprian},
   author={Lacey, Michael T.},
   author={Tao, Terence},
   author={Thiele, Christoph},
   title={Breaking the duality in the return times theorem},
   journal={Duke Math. J.},
   volume={143},
   date={2008},
   number={2},
   pages={281--355},
   issn={0012-7094},
}

\bib{MR2246591}{article}{
   author={Frantzikinakis, Nikos},
   title={Uniformity in the polynomial Wiener-Wintner theorem},
   journal={Ergodic Theory Dynam. Systems},
   volume={26},
   date={2006},
   number={4},
   pages={1061--1071},
   issn={0143-3857},
}

%

\bib{MR0238019}{article}{
   author={Hunt, Richard A.},
   title={On the convergence of Fourier series},
   conference={
      title={Orthogonal Expansions and their Continuous Analogues},
      address={Proc. Conf., Edwardsville, Ill.},
      date={1967},
   },
   book={
      publisher={Southern Illinois Univ. Press, Carbondale, Ill.},
   },
   date={1968},
   pages={235--255},
}

\bib{MR2188130}{article}{
   author={Ionescu, Alexandru D.},
   author={Wainger, Stephen},
   title={$L\sp p$ boundedness of discrete singular Radon transforms},
   journal={J. Amer. Math. Soc.},
   volume={19},
   date={2006},
   number={2},
   pages={357--383 (electronic)},
}

\bib{MR2318564}{article}{
   author={Ionescu, Alexandru D.},
   author={Stein, Elias M.},
   author={Magyar, Akos},
   author={Wainger, Stephen},
   title={Discrete Radon transforms and applications to ergodic theory},
   journal={Acta Math.},
   volume={198},
   date={2007},
   number={2},
   pages={231--298},
}


\bib{MR583403}{article}{
   author={Kenig, Carlos E.},
   author={Tomas, Peter A.},
   title={Maximal operators defined by Fourier multipliers},
   journal={Studia Math.},
   volume={68},
   date={1980},
   number={1},
   pages={79--83},
   issn={0039-3223},
}


\bib{151206918}{article}{
   author={Krause, Ben},
  author={Lacey, Michael T.},
    title = {A Discrete Quadratic Carleson Theorem on $ \ell ^2 $ with a Restricted Supremum},
   journal = {IMRN, to appear}, 
     year = {2015},
}


\bib{14100806}{article}{
   author = {Krause, B.},
   author=  {Zorin-Kranich, P.},
    title = {A random pointwise ergodic theorem with Hardy field weights},
   eprint = {1410.0806},
}

\bib{150105818}{article}{
   author={Lacey, Michael T.},
   title={An elementary proof of the $A_2$ Bound},
   date={2015},
   journal={Israel J. Math., to appear}, 
   }
   
\bib{lacey_spencer}{article}{
   author={Lacey, Michael T.},
   author={Spencer, Scott}, 
    title={Sparse Bounds for Oscillatory and Random Singular Integrals},
   date={2016},
   eprint={http://arxiv.org/abs/1609.06364}, 
   }

\bib{MR1288356}{article}{
   author={Lacey, Michael},
   author={Petersen, Karl},
   author={Wierdl, M{\'a}t{\'e}},
   author={Rudolph, Dan},
   title={Random ergodic theorems with universally representative sequences},
   language={English, with English and French summaries},
   journal={Ann. Inst. H. Poincar\'e Probab. Statist.},
   volume={30},
   date={1994},
   number={3},
   pages={353--395},
   issn={0246-0203},
}


\bib{MR3484688}{article}{
   author={Lerner, Andrei K.},
   title={On pointwise estimates involving sparse operators},
   journal={New York J. Math.},
   volume={22},
   date={2016},
   pages={341--349},
   issn={1076-9803},
   review={\MR{3484688}},
}


\bib{MR2545246}{article}{
   author={Lie, Victor},
   title={The (weak-$L\sp 2$) boundedness of the quadratic Carleson
   operator},
   journal={Geom. Funct. Anal.},
   volume={19},
   date={2009},
   number={2},
   pages={457--497},
}

\bib{11054504}{article}{
   author={Lie, Victor},
    title =     {The Polynomial Carleson Operator},
   eprint = {http://arxiv.org/abs/1105.4504},
}


 \bib{MR3421994}{article}{
   author={Mirek, Mariusz},
   title={Weak type $(1,1)$ inequalities for discrete rough maximal
   functions},
   journal={J. Anal. Math.},
   volume={127},
   date={2015},
   pages={247--281},
   issn={0021-7670},
}

\bib{151207524}{article}{
Author = {Mirek, Mariusz},
Title = {Square function estimates for discrete Radon transforms},
Year = {2015},
Eprint = {arXiv:1512.07524},
}


\bib{151207523}{article}{
   author={Mirek, Mariusz},
   author={Trojan, Bartosz},
   author={Stein, E. M.},
Title = {$L^p(\mathbb Z ^d)$-estimates for discrete operators of Radon type: Variational estimates},
Eprint = {arXiv:1512.07523},
}

\bib{151207518}{article}{
   author={Mirek, Mariusz},
   author={Trojan, Bartosz},
   author={Stein, E. M.},
Title = {$L^p( \mathbb Z ^d)$-estimates for discrete operators of Radon type: Maximal functions and vector-valued estimates},
Eprint = {arXiv:1512.07518},
}

\bib{MR2881301}{article}{
   author={Oberlin, Richard},
   author={Seeger, Andreas},
   author={Tao, Terence},
   author={Thiele, Christoph},
   author={Wright, James},
   title={A variation norm Carleson theorem},
   journal={J. Eur. Math. Soc. (JEMS)},
   volume={14},
   date={2012},
   number={2},
   pages={421--464},
   issn={1435-9855},
}

%
%

\bib{LP_PC}{article}{
     author={Pierce, Lillian B.},
    title={Personal Communication, American Institute of Mathematics}, 
    date={May, 2015},
}


\bib{MR2872554}{article}{
   author={Pierce, Lillian B.},
   title={Discrete fractional Radon transforms and quadratic forms},
   journal={Duke Math. J.},
   volume={161},
   date={2012},
   number={1},
   pages={69--106},
}

\bib{150503882}{article}{
author = {Pierce, L. B.}, 
author={ Yung, Po-Lam},
Title = {A polynomial Carleson operator along the paraboloid},
Year = {2015},
Eprint = {1505.03882},
}

\bib{MR1131794}{article}{
   author={Rosenblatt, Joseph},
   title={Universally bad sequences in ergodic theory},
   conference={
      title={Almost everywhere convergence, II},
      address={Evanston, IL},
      date={1989},
   },
   book={
      publisher={Academic Press, Boston, MA},
   },
   date={1991},
   pages={227--245},
}

\bib{MR1056560}{article}{
   author={Stein, E. M.},
   author={Wainger, S.},
   title={Discrete analogues of singular Radon transforms},
   journal={Bull. Amer. Math. Soc. (N.S.)},
   volume={23},
   date={1990},
   number={2},
   pages={537--544},
}

\bib{MR1719802}{article}{
   author={Stein, Elias M.},
   author={Wainger, Stephen},
   title={Discrete analogues in harmonic analysis. I. $l\sp 2$ estimates for
   singular Radon transforms},
   journal={Amer. J. Math.},
   volume={121},
   date={1999},
   number={6},
   pages={1291--1336},
}

\bib{MR1879821}{article}{
   author={Stein, Elias M.},
   author={Wainger, Stephen},
   title={Oscillatory integrals related to Carleson's theorem},
   journal={Math. Res. Lett.},
   volume={8},
   date={2001},
   number={5-6},
   pages={789--800},
}

\bib{MR0004098}{article}{
   author={Wiener, Norbert},
   author={Wintner, Aurel},
   title={Harmonic analysis and ergodic theory},
   journal={Amer. J. Math.},
   volume={63},
   date={1941},
   pages={415--426},
   issn={0002-9327},
}


\end{biblist}
\end{bibdiv}

\end{document}